\newtheorem{definition}{Definition}
\newtheorem{theorem}{Theorem}
\newtheorem{remark}{Remark}
\newcommand{\R}{\mathbb{R}}
\newcommand{\dd}{\mathrm{d}}
\newcommand{\sign}{\mathrm{sign}}
\begin{document}
\title{Two-dimensional fractional Brownian motion: Analysis in time and frequency domains}


\author{Micha{\l} Balcerek}%
    \affiliation{Faculty of Pure and Applied Mathematics, Hugo Steinhaus Center, Wroc{\l}aw University of Science and Technology, 50-370 Wrocław, Poland}%
    \email{michal.balcerek@pwr.edu.pl}

\author{Adrian Pacheco-Pozo}
\affiliation{Department of Electrical and Computer Engineering, Colorado State University, Fort Collins, CO 80523, USA}
\affiliation{School of Biomedical and Chemical Engineering, Colorado State University, Fort Collins, CO 80523, USA}

\author{Agnieszka Wy{\l}oma{\'n}ska}
    \affiliation{Faculty of Pure and Applied Mathematics, Hugo Steinhaus Center, Wroc{\l}aw University of Science and Technology, 50-370 Wrocław, Poland}%

\author{Krzysztof Burnecki}
    \affiliation{Faculty of Pure and Applied Mathematics, Hugo Steinhaus Center, Wroc{\l}aw University of Science and Technology, 50-370 Wrocław, Poland}%

\author{Diego Krapf}
\affiliation{Department of Electrical and Computer Engineering, Colorado State University, Fort Collins, CO 80523, USA}
\affiliation{School of Biomedical and Chemical Engineering, Colorado State University, Fort Collins, CO 80523, USA}

\begin{abstract}
This article introduces a novel construction of the two-dimensional fractional Brownian motion (2D fBm) with dependent components. Unlike similar models discussed in the literature, our approach uniquely accommodates the full range of model parameters and explicitly incorporates cross-dependencies and anisotropic scaling through a matrix-valued Hurst operator. We thoroughly analyze the theoretical properties of the proposed causal and well-balanced 2D fBm versions, deriving their auto- and cross-covariance structures in both time and frequency domains. In particular, we present the power spectral density of these processes and their increments. Our analytical findings are validated with numerical simulations. This work provides a comprehensive framework for modeling anomalous diffusion phenomena in multidimensional systems where component interdependencies are crucial. 
\end{abstract}

\keywords{2D fBm, vector fractional Brownian motion, power spectral density, cross spectral density, multivariate self-similar processes, autocorrelation function, cross-correlation function}

\maketitle
\onecolumngrid 



Two-dimensional fractional Brownian motion extends the idea of random motion by allowing both long-term memory and interdependence between different directions of movement. Unlike standard models, where the horizontal and vertical components are assumed to fluctuate independently, this process accounts for correlations and directional differences in scaling. 
We introduce these features through correlated underlying noises and the matrix-valued Hurst operator, which both make it possible for one direction to show, for example, slower subdiffusive dynamics while the other exhibits faster, superdiffusive behavior. We also show that, depending on how we choose to construct the process, the cross-correlations between components may be symmetric or asymmetric, leading to different temporal and spectral characteristics. The considered model is especially relevant for systems where anisotropy and interdependence cannot be ignored, such as the motion of particles in complex biological environments, the joint evolution of financial indices, or, \added[id=DK]{simply, two correlated fractional Brownian motions}.

\section{Introduction}
\label{sec:intro}
    
The study of anomalous diffusion phenomena has emerged as a critical area of research in diverse scientific fields, from cellular biophysics to financial mathematics \cite{pccp,krapf2015mechanisms,manzo2015review,shen2017single,balcerek_2025_tele,plerou2000economic,andi21}. While standard Brownian motion adequately describes normal diffusion processes characterized by a mean-square displacement that increases linearly with time \cite{einstein05}, multiple complex systems exhibit a non-linear scaling that demands a more sophisticated mathematical framework \cite{pccp}. 
\added[id=MB]{Fractional Brownian motion (fBm) originates from the foundational work of Kolmogorov in the 1940s, who first formulated a Gaussian process with stationary increments and a power-law covariance structure\cite{kol140}.} It was later formalized by Mandelbrot and van Ness, \added[id=MB]{who introduced the integral representation and the Hurst parameter $H$ as a descriptor of memory and anomalous scaling}\cite{NessMandelbrot}. However, when considering multidimensional systems, the scalar parameter $H$ can be insufficient to capture the complex cross-dependencies and anisotropic scaling behaviors observed in empirical data. Multidimensional fBm or vector fBm \cite{meerschaert1999multidimensional,jeon2010fractional,lavancier2009covariance,amblard2010basic}, thus, represents an advancement in the modeling of such anomalous diffusion processes in multidimensional settings. 

A common approach to modeling multidimensional fBm involves treating each spatial component as an independent random walk. This strategy has been employed in a variety of contexts, including the analysis of diffusion in living cells and complex fluids, where isotropic behavior is often assumed over the observation timescales \cite{sikora_diego,krapf2019spectral,sposini2022towards,munoz2023quantitative}. For instance, models with independent fBm components have been used to characterize anomalous transport of membrane proteins \cite{sikora_diego}, analyze motion changes in single-particle trajectories \cite{munoz2023quantitative}, and assess spectral properties of anomalous diffusion processes \cite{sposini2022towards,munoz2023quantitative}. While this approach is well justified in systems where anisotropies are either negligible or not detectable within the experimental resolution, it becomes insufficient in environments where directional dependence or cross-correlations play a significant role. In such cases, assuming independence between components can obscure critical features of the dynamics and lead to misinterpretations. This motivates the development of models that explicitly incorporate cross-dependencies and anisotropic scaling.
    
While one expects many biophysical systems to be isotropic \cite{broetal09,weron2017ergodicity,sabri2020elucidating}, this is not the case when underlying structures have an inherent orientation. In such systems, a tracer may display a different scaling behavior along each spatial dimension and it is necessary to consider the interdependence among the different components. For example, anisotropic scalings emerge in the dynamics of proteins on the cell surface due to the presence of stress fibers \cite{smith1979anisotropic} and actin filaments \cite{sadegh2017plasma}, ions in the brain \cite{vorisek1997evolution, de2011anisotropic}, macromolecules in the nucleocytoplasm during cell division \cite{pawar2014anisotropic}, and species in rocks \cite{van2004anisotropic}.  Beyond physical systems, financial markets often exhibit non-uniform behavior, prompting the use of multivariate models to capture the joint dynamics of multiple assets or economic indicators \cite{balcerek2025two}. These models help in risk management, option pricing, and forecasting \cite{WEERAWARDENA200621,BIELAK2021102308}.
Multidimensional models that allow dependencies between components have been covered in the literature. Notable examples include multidimensional Brownian motion\cite{balcerek2025two,SACERDOTE2016275,TSEKOV1995175,doi:10.1080/17442508.2024.2315274,Kou_Zhong_2016,doi:10.1137/0145060} and multidimensional Ornstein-Uhlenbeck processes\cite{doi:10.1080/17442508.2024.2315274,PhysRevE.99.062221}. Previous discussions also extended to discrete-time models with dependent components, such as vector autoregressive time series\cite{maraj,MOLLER2001143,MORISHIMA1991697,10.1257/jep.15.4.101}, or multidimensional generalized autoregressive conditional heteroskedasticity (GARCH) models \cite{Ling_McAleer_2003,ZHANG2007288,BOUSSAMA20112331}. 

In this article, we propose the construction of a two-dimensional fractional Brownian motion (2D fBm) by introducing a dependence on the underlying noise in the time representation of the process. This construction explicitly considers cross-dependencies and anisotropic scaling by using a matrix-valued Hurst operator that allows direction-dependent scaling properties. \added[id=AW]{Depending on the construction of the 2D fBm, we consider two versions of this process: causal and well-balanced. The }\added[id=DK]{well-balanced process is time-reversible, a property absent in the causal 2D fBm. As a result, these two formulations exhibit distinct correlation structures: symmetric cross-correlations for the well-balanced case and asymmetric cross-correlations for the causal case.}  
We examine the theoretical properties of the two versions of 2D fBm and derive the auto- and cross-covariance structures of such process and their increments. In addition, we calculate the power spectral density (PSD) of these processes \cite{norton2003fundamentals}, a fundamental measure often used in the characterization of time series across different disciplines \cite{hennig2011nature,krapf2013nonergodicity,dean2016sample,fox2021aging,balcerek2023modelling}. At last, we present numerical simulations that showcase and validate our analytical results. 
Although multidimensional fBm with dependent components has been investigated in the existing literature\cite{amblard2010basic,maraj}, our \added[id=AW]{proposition} has the advantage of having a relatively simple construction, based on the correlated Brownian motions used in its integral representation. The new approach uniquely allows for any range of model parameters, a significant advantage over the previous constructions\cite{amblard2010basic}. Consequently, this simplified construction enables a thorough examination of the process characteristics, particularly with respect to its auto- and cross-covariance structures in both the time and frequency domains.

The remainder of the paper is organized as follows. In \autoref{sec:sec1_model}, we introduce the 2D fBm and discuss its two versions, namely causal and well-balanced. Next, in \autoref{sec:3}, we analyze the covariance structure of 2D fBm and its increments, demonstrating the differences for causal and well-balanced cases. In \autoref{sec:4}, we analyze the covariance structure of 2D fBm and the corresponding increments in the frequency domain. In \autoref{sec:5} we present numerical simulations, while the last section contains conclusions. In the Appendix, we present additional calculations and plots.

    
\section{Model}
\label{sec:sec1_model}
\noindent In this section, we propose two specific constructions of the 2D fBm, which serve as a mathematical tool for modeling anomalous diffusion processes in multidimensional spaces. Unlike the classical one-dimensional case, which relies on scalar parameters to characterize diffusion behavior, we employ a matrix-valued $H$ to capture directional dependencies and cross-dimensional correlations. While we only consider the two-dimensional case, the extension to higher dimensions is straightforward.
First, following Stoev and Taqqu\cite{stoev2006rich}, let us introduce the following function
\begin{align}
    \label{eq:f_pm}
    f_\pm(x; t, \beta) = (t-x)_\pm^\beta - (-x)_\pm^\beta,
\end{align}
where the notation $(x)_+$ denotes the positive part of $x$, i.e., 
\begin{align}
    (x)_+ \equiv \begin{cases} x, & \textrm{if } x>0 ,\\
        0, & \textrm{if } x \leq 0,
    \end{cases} 
\end{align}
and $(x)_-$ denotes the negative part of $x$, i.e., $(x)_- = (-x)_+$. Let us consider the two-dimensional process $\mathbf{X}(t) = [X_1(t), X_2(t)]', t\geq 0$, defined as follows 
\begin{align}
    \begin{cases}
    X_1(t) &= \sigma_1 a_{H_1} \displaystyle\int_{-\infty}^\infty f_+\left(s; t, H_1-1/2\right) \dd \tilde W_1(s),\\
    X_2(t) &= \sigma_2 a_{H_2} \displaystyle\int_{-\infty}^\infty f_+\left(s; t, H_2-1/2\right) \dd \tilde W_2(s).   
    \end{cases}
\label{eq:c_fbm}
\end{align}
A similar alternative $\mathbf{X}^*(t) = [X_1^*(t), X_2^*(t)]', t\geq 0$, utilizing both $f_+$ and $f_-$ is given by
\begin{align}
    \begin{cases}
     X_1^*(t) &= \sigma_1 a^*_{H_1} \displaystyle\int_{-\infty}^\infty \left(f_+\left(s; t, H_1-1/2\right) + f_-(s;t,H_1-1/2)\right) \dd \tilde W_1(s),\\
    X_2^*(t) &= \sigma_2 a^*_{H_2}\displaystyle \int_{-\infty}^\infty \left(f_+\left(s; t, H_2-1/2\right) + f_-(s;t,H_2-1/2)\right) \dd \tilde W_2(s).
    \end{cases}
    \label{eq:wb_fbm}
\end{align}
In both cases, $\dd\tilde  W_1(t),  \dd  \tilde W_2(t)$ are $\rho$-correlated Gaussian noises on the real line, $H_1, H_2 \in(0,1)$, and $\sigma_1, \sigma_2>0$. The constants $a_{H_1}$, $a_{H_2}$, $a^*_{H_1}$, and $a^*_{H_2}$ are chosen in such a way that the processes $X_j(t)$ and $X^*_j(t)$ for $j=1,2$ have variances $\sigma_1^2$ and $\sigma_2^2$ for $t=1$, respectively, that is, $\langle X_j^2(1)\rangle = \langle X^{*2}_j(1)\rangle = \sigma_j^2, \textrm{ for } j=1,2$. {Parameter $\rho$ will be referred to as correlation coefficient of the underlying noise.} Let us note that for both processes $\mathbf{X}(t)$ and $\mathbf{X}^*(t)$, the marginals $X_j(t)$ and  $X_j^*(t)$ ($j = 1,2$) are fractional Brownian motions with corresponding Hurst parameters $H_j$ (cf. \cite{NessMandelbrot} for $\mathbf{X}(t)$ and \cite{stoev2006rich} for $\mathbf{X}^*(t)$). %
The difference between the seemingly similar processes $\mathbf{X}(t)$ and $\mathbf{X}^*(t)$ lies in the cross-dependence structure between their components, specifically between $X_1(t)$ and $X_2(t)$, and between $X_1^*(t)$ and $X_2^*(t)$, respectively, which we discuss in the next section.


\noindent To establish the connection between the processes $\mathbf{X}(t)$ and $\mathbf{X}^*(t)$ and the vector fractional Brownian motion discussed in the literature \cite{amblard2010basic, coeurjolly2010multivariate, pipiras2017long}, let us consider two independent Brownian motions $W_1(t), W_2(t)$ on the real line and define $\tilde W_1(t)$ and $\tilde W_2(t)$ in the following way
\begin{align}
    \begin{cases}
        \tilde W_1(t) = W_1(t)\\
        \tilde W_2(t) = \rho W_1(t) + \sqrt{1-\rho^2} W_2(t).
    \end{cases}
\label{eq:construction}
\end{align}
One can easily show that such a construction leads to $\langle \dd \tilde W_1(t) \dd \tilde W_2(t) \rangle = \rho \dd t$. 
Using the construction from Eq. \eqref{eq:construction}, the processes defined in Eqs. \eqref{eq:c_fbm} and \eqref{eq:wb_fbm} can be represented more concisely, as shown in the following definitions. Note that the adjectives ``causal'' and ``well-balanced'' in the processes' names are already established in the literature. We  follow this convention throughout the article. 
\begin{definition}[Causal 2D fBm]
\label{def:vfbm-causal}
Let $H_1, H_2 \in (0, 1)$ and $|\rho|\leq 1$. Causal two-dimensional fractional Brownian motion $\mathbf{X}(t), t\geq 0,$ is defined via the following integral representation
\begin{align}
    \mathbf{X}(t) = \int_{-\infty}^\infty 
        \begin{bmatrix}
            \sigma_1 a_{H_1} f_+(s;t, H_1 - 1/2) & 0 \\
            0 & \sigma_2 a_{H_2} f_+(s;t, H_2-1/2)
        \end{bmatrix} 
        \begin{bmatrix}
            1 & 0 \\
            \rho & \sqrt{1-\rho^2}
        \end{bmatrix}
        \begin{bmatrix}
            \dd W_1(s)\\
            \dd W_2(s)
        \end{bmatrix},
\end{align}    
where $\sigma_1, \sigma_2>0$. The constants $a_{H_j}, j = 1, 2$ are non-negative, and are chosen in such a way that the variances of the marginals $X_j(t)$ at time $t=1$ are equal to $\sigma_j^2$, i.e., 
\begin{align}
    \label{eq:C_causal}
    a^2_{H_j} =\frac{\Gamma(2H_j+1)\sin(H_j\pi)}{\Gamma^2\left(H_j + \frac{1}{2}\right)}.
\end{align}
\end{definition}
\noindent The process $\mathbf{X}(t)$ is a zero-mean Gaussian process and has a strong connection to the so-called operator fractional Brownian motion\cite{pipiras2017long,MAEJIMA1994139,Gustavo_2011,Gustavo_2018}. Such a relation allows for analyzing the  covariance structure of the process $\mathbf{X}(t)$ (see \autoref{thm:vfbm-cov} in \autoref{sec:3}).
\begin{remark}
Let us notice that the marginals $X_j(t)$, $j=1,2$ of the causal 2D fBm follow the well-known Mandelbrot and van Ness definition\cite{NessMandelbrot} of fBm, i.e., 
\begin{align}
    X_j(t) = \sigma_j a_{H_j} \int_{-\infty}^t 
        \left((t-s)_+^{H_j-\frac{1}{2}} - (-s)_+^{H_j - \frac{1}{2}}\right) \dd \tilde W_j(s), \quad \mathrm{for }\ j =1, 2.
\end{align}    
\end{remark}
\noindent Contrary to the causal construction presented in Definition \autoref{def:vfbm-causal}, the well-balanced case adds to the previous case an anti-causal filtering of a Brownian motion. 
We now introduce such a process, together with the calculation of the normalization constant (\autoref{thm:well-balanced-constant} in Appendix \autoref{App:B}). The covariance structure of well-balanced 2D fBm is provided in \autoref{sec:3}. 
\begin{definition}[Well-balanced 2D fBm]
    \label{def:wb-fbm2d}
   Let $H_1, H_2 \in (0, 1)$ and $|\rho|\leq 1$. Well-balanced two-dimensional fractional Brownian motion $\mathbf{X}^*(t)$ is given by the following representation
{
    \begin{align}
    \mathbf{X^*}(t) = \int_{-\infty}^\infty 
        \begin{bmatrix}
            g_1(s; t)  & 0 \\
            0 & g_2(s; t)
        \end{bmatrix} 
        \begin{bmatrix}
            1 & 0 \\
            \rho & \sqrt{1-\rho^2}
        \end{bmatrix}
        \begin{bmatrix}
            \dd W_1(s)\\
            \dd W_2(s)
        \end{bmatrix},
\end{align}
}
where 
\begin{align}
g_j(s; t) = \sigma_j a^*_{H_j} \left(f_+(s; t, H_j-1/2)+f_-(s; t, H_j-1/2)\right)     
\end{align}
for $s\in \mathbb{R}, t \geq 0, j = 1, 2$, $\sigma_1,\sigma_2>0$, and the constants $a^*_{H_j}$ are given by
\begin{align}
    \label{eq:C_wb}
    a^{*2}_H 
    & = \frac{2H(1-2H)\pi}{8\Gamma(2-2H)\cos(H\pi) \Gamma^2\left(H+1/2\right) \cos^2 \left(\frac{\pi (H-1/2)}{2}\right)}, 
\end{align}
to ensure the variances of the marginals $X_j^*(t)$ at time $t = 1$ are equal to $\sigma_j^2$.
\end{definition}
\noindent Similarly to the causal 2D fBm, the well-balanced 2D fBm is a zero-mean Gaussian process. 

\added[id=MB]{
The proposed constructions of 2D fBm are closely related to the general operator fractional Brownian motion (OFBM), a family of processes even more general than vector fractional Brownian motion. OFBM is a multivariate Gaussian process defined through a matrix-valued self-similarity exponent and integral kernels acting on multidimensional Brownian noise\cite{pipiras2017long,MAEJIMA1994139,Gustavo_2011,Gustavo_2018}. This connection provides helpful tools for the presented models and situates them within the broader theory of multivariate self-similar processes.}

\added[id=MB]{
Both of our considered models coincide with OFBM with the exponent matrix
$D~=~\mathrm{diag}\left(H_1 - \frac{1}{2},\, H_2 - \frac{1}{2}\right)$
and a kernel matrix determined by correlation parameter of the underlying noise $\rho$ and the model itself. Since in the causal 2D fBm the negative-time kernel is absent, the resulting process is inherently asymmetric in time. In contrast, the well-balanced 2D fBm corresponds to an OFBM with both positive and negative kernels present in a symmetric manner, resulting in a time-reversible process. This structural distinction is discussed in \autoref{sec:3} and is described through the asymmetry parameter~$\eta_{12}$.}

\added[id=MB]{
While OFBM theory typically imposes nontrivial admissibility conditions on the exponent and kernel matrices, our presented construction avoids these difficulties by employing correlated Brownian motions directly in the integral representation. As a consequence, the model is valid for all $H_1,H_2 \in (0,1)$ and $|\rho| \leq 1$ without additional constraints.}
\added[id=MB]{Thus, the introduced 2D fBm may be viewed as a subclass of OFBM, one that naturally captures both symmetric and asymmetric cross-dependence through the choice of causal or well-balanced form of integral kernels.}


\section{Covariance structure of 2D fBm}
\label{sec:3}
\noindent In this section, we discuss the covariance structure of both causal and well-balanced 2D fBms, along with that of their increments.
\subsection{Covariance structure of the process}
\noindent As mentioned, both causal and well-balanced 2D fBm are Gaussian processes; therefore, their covariance structure provides a complete characterization. In this section, we discuss this characterization in time domain.

\begin{theorem}
\label{thm:vfbm-cov}
Let $H_1, H_2 \in (0,1)$ and $|\rho|\leq1$. The covariance structure of 2D fBm $\mathbf{Z}(t)= [Z_1(t), Z_2(t)]'$, $ t\geq 0$ is as follows
\begin{align}
            \label{eq:cross_covariance}
            \gamma_{jk}(t, s) \equiv \langle Z_j(t)Z_k(s)\rangle = \frac{\sigma_j\sigma_k}{2}\left(w_{jk}(t) |t|^{H_j + H_k} + w_{jk}(-s) |s|^{H_j+H_k} - w_{jk}(t-s)|t-s|^{H_j+H_k} \right)
        \end{align}
        \noindent for $t, s \geq 0$, where $\sigma_j^2 = \langle Z_j^2(1)\rangle$ and 
        \begin{align}
        \label{eq:w_jk_causal}
        w_{jk}(u) = \begin{cases}
            \rho_{jk} - \eta_{jk} \sign(u), &\quad H_j+H_k \neq 1,\\
            \rho_{jk} - \eta_{jk} \sign(u)\log|u|, &\quad H_j+H_k =1.
        \end{cases}    
        \end{align}
        The, so called, cross-correlation parameters $\rho_{12}$ and $\rho_{21}$ are given by
        \begin{align}
            \rho_{12} &= \rho_{21}= \rho\frac{\sqrt{\Gamma(2H_1+1)\Gamma(2H_2+1)\sin(H_1\pi)\sin(H_2\pi)}}{\Gamma(H_1+H_2+1)\sin\left(\frac{H_1+H_2}{2}\pi\right) }\cos\left[\frac{(H_2-H_1)\pi}{2}\right],
        \label{eq:rho12}
        \end{align}
        and $\rho_{11}=\rho_{22}=1$
        while the asymmetry parameters $\eta_{jk}, j,k = 1, 2$, depend on the choice of the model, and in the causal 2D fBm (case $\mathbf{Z}(t)=\mathbf{X}(t)$) they are equal to 
        \begin{align}
            \eta_{12} &= -\eta_{21} =\rho\frac{\sqrt{\Gamma(2H_1+1)\Gamma(2H_2+1)\sin(H_1\pi)\sin(H_2\pi)}}{\Gamma(H_1+H_2+1)\cos\left(\frac{H_1+H_2}{2}\pi\right) }\sin\left[\frac{(H_2-H_1)\pi}{2}\right],
        \label{eq:eta12}
        \end{align}
        and $\eta_{11} = \eta_{22} = 0$, while for well-balanced 2D fBm (case $\mathbf{Z}(t)=\mathbf{X}(t)^*$) we have
        \begin{align}
            \label{eq:eta_wb}
            \eta_{11}=\eta_{22}=\eta_{12}=\eta_{21}=0.
        \end{align}
    \begin{proof}
    The proof of this theorem is presented in Appendix \autoref{appendix:proof_causal_cov} and Appendix \autoref{appendix:proof_well-balanced_cov}. 
    \end{proof}
\end{theorem}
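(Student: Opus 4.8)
The plan is to use that both $\mathbf{X}$ and $\mathbf{X}^*$ are Gaussian integrals of deterministic kernels against the $\rho$-correlated noise, so that computing $\gamma_{jk}$ reduces to an $L^2$ pairing of kernels. Writing $\langle \dd\tilde W_j(u)\,\dd\tilde W_k(u')\rangle = \rho^{(jk)}\delta(u-u')\,\dd u$ with $\rho^{(11)}=\rho^{(22)}=1$ and $\rho^{(12)}=\rho^{(21)}=\rho$, the It\^o isometry yields
\begin{align}
\gamma_{jk}(t,s) = \sigma_j\sigma_k\, a_{H_j}a_{H_k}\,\rho^{(jk)} \int_{-\infty}^\infty K_j(u;t)\,K_k(u;s)\,\dd u, \nonumber
\end{align}
where $K_j(u;t)=f_+(u;t,H_j-1/2)$ in the causal case and $K_j=f_++f_-$ in the well-balanced case. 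For $j=k$ the integral reproduces the one-dimensional fBm variance and thereby pins down the constants \eqref{eq:C_causal} and \eqref{eq:C_wb}; the substance of the theorem is the off-diagonal term, so I would focus on $j\neq k$.

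First I would pass to the harmonizable (frequency-domain) representation, in which the causal kernel has Fourier transform $\Gamma(H_j+\tfrac12)(i\xi)^{-(H_j+1/2)}(e^{it\xi}-1)$. Plancherel's identity then turns the pairing into
\begin{align}
\int_{-\infty}^\infty K_j K_k\,\dd u = \frac{\Gamma(H_j+\tfrac12)\Gamma(H_k+\tfrac12)}{2\pi}\int_{-\infty}^\infty |\xi|^{-(H_j+H_k+1)} e^{\,i\frac{\pi}{2}(H_k-H_j)\sign\xi}\,(e^{it\xi}-1)(e^{-is\xi}-1)\,\dd\xi. \nonumber
\end{align}
Expanding $(e^{it\xi}-1)(e^{-is\xi}-1)=e^{i(t-s)\xi}-e^{it\xi}-e^{-is\xi}+1$ reduces the computation to the single building block $\int_0^\infty \xi^{-(H_j+H_k+1)}(e^{i\xi\tau}-1)\,\dd\xi=\Gamma(-H_j-H_k)(-i\tau)^{H_j+H_k}$, obtained by analytically continuing $\int_0^\infty \xi^{s-1}e^{i\xi\tau}\,\dd\xi=\Gamma(s)(-i\tau)^{-s}$ to $s=-(H_j+H_k)$, evaluated at $\tau\in\{t-s,\,t,\,-s\}$.

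Next I would split each $\xi$-integral into $\xi>0$ and $\xi<0$ and combine the phases. Using $(-i\tau)^{H}=|\tau|^{H}e^{-i\frac{\pi}{2}H\sign\tau}$ with $H=H_j+H_k$, each building block collapses to $2\Gamma(-H)|\tau|^{H}\big\{\cos[\tfrac{\pi}{2}(H_k-H_j)]\cos[\tfrac{\pi}{2}H]+\sign(\tau)\sin[\tfrac{\pi}{2}(H_k-H_j)]\sin[\tfrac{\pi}{2}H]\big\}$. The part even in $\sign(\tau)$ assembles into $\rho_{jk}|\cdot|^{H_j+H_k}$ and the part odd in $\sign(\tau)$ into $-\eta_{jk}\sign(\cdot)|\cdot|^{H_j+H_k}$, reproducing \eqref{eq:cross_covariance}. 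The reflection formula $\Gamma(-H)=-\pi/[\sin(\pi H)\Gamma(1+H)]$ together with $\sin(\pi H)=2\sin(\tfrac{\pi H}{2})\cos(\tfrac{\pi H}{2})$ converts the even prefactor to a $\sin(\tfrac{H_j+H_k}{2}\pi)$ denominator and the odd one to a $\cos(\tfrac{H_j+H_k}{2}\pi)$ denominator; combining this with $a_{H_j}\Gamma(H_j+\tfrac12)=\sqrt{\Gamma(2H_j+1)\sin(H_j\pi)}$ collapses the constants to exactly \eqref{eq:rho12} and \eqref{eq:eta12}.

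Finally, for the well-balanced process the symmetric kernel $f_++f_-$ has Fourier transform $2\Gamma(H_j+\tfrac12)\cos[\tfrac{\pi}{2}(H_j+\tfrac12)]|\xi|^{-(H_j+1/2)}(e^{it\xi}-1)$, which carries no $\sign(\xi)$ phase; the odd-in-$\sign(\tau)$ contribution then vanishes identically, giving \eqref{eq:eta_wb} while the symmetric coefficients again reduce to $\rho_{jk}$. The main obstacle is precisely the bookkeeping of the complex phases in the causal case: tracking the $\sign(\xi)$ factor and the branch of $(-i\tau)^{H}$ through the four exponential terms and cleanly separating even and odd parts to land on the exact trigonometric prefactors. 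A secondary difficulty is the degenerate line $H_j+H_k=1$, where $\cos(\tfrac{\pi H}{2})$ vanishes and $\Gamma(-H)$ has a pole; there I would take the limit $H_j+H_k\to1$, in which the singular constant cancels and differentiating the exponent in $|\tau|^{H}$ produces the $\log|u|$ term recorded in \eqref{eq:w_jk_causal}.
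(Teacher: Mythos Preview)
Your proposal is correct, and at the mathematical core it is the same idea as the paper's: both routes pass to the harmonizable side, and $\rho_{jk},\eta_{jk}$ emerge from the real and imaginary parts of the phase factor $e^{\,i\frac{\pi}{2}(H_k-H_j)\sign\xi}$ attached to $|\xi|^{-(H_j+H_k+1)}$. The execution, however, is packaged differently. The paper does not perform your Plancherel computation from scratch; instead it recognises $\mathbf{X}$ and $\mathbf{X}^*$ as particular operator fractional Brownian motions in the sense of Pipiras--Taqqu, reads off the matrices $M_\pm$ from Definitions~\ref{def:vfbm-causal} and~\ref{def:wb-fbm2d}, computes $A=A_1+iA_2$ and $C=AA^*$ via the formulas (9.3.28--29) in that reference, and then quotes their Proposition~9.3.19, which already returns the covariance in the form~\eqref{eq:cross_covariance} together with the identifications $\sigma_j\sigma_k\rho_{jk}=4b_1\!\big(\tfrac{H_j+H_k}{2}\big)\Re c_{jk}$ and $\sigma_j\sigma_k\eta_{jk}=4b_2\!\big(\tfrac{H_j+H_k}{2}\big)\Im c_{jk}$. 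The remaining reflection-formula simplification to~\eqref{eq:rho12}--\eqref{eq:eta12} is then exactly the bookkeeping you describe. Your direct route is more self-contained and makes transparent why the symmetrised kernel kills the $\sign\xi$ phase (in the paper's language this is $A_1=0$, hence $C$ real and $\eta_{jk}=0$). Conversely, invoking the ready-made OFBM proposition, as the paper does, sidesteps the need to justify the analytic continuation of your building block when $H_j+H_k>1$ and the separate limiting argument at $H_j+H_k=1$ that you flag at the end.
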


\begin{remark}
    Since for well-balanced 2D fBm $\eta_{jk}=0$ for $j, k= 1, 2$, the covariance function simplifies remarkably. Similar to one-dimensional fBm, it is given by
    \begin{align}
        \gamma_{jk}(t, s) = \frac{\sigma_j\sigma_k \rho_{jk}}{2}\left(|t|^{H_j + H_k} + |s|^{H_j+H_k} - |t-s|^{H_j+H_k} \right).
    \end{align}
\end{remark}

\begin{remark}
\label{remark:3}
Let us note that for $H_1=H_2$ both causal and well-balanced 2D fBms define the same process.  
For $H_1=H_2=0.5$, both constructions lead to 2D Brownian motion with $\rho$ correlated coordinates\cite{balcerek2025two}.
\end{remark}

\begin{remark}
Let us observe that $\rho_{12}$ given in Eq. \eqref{eq:rho12} can be expressed as  
\begin{align}
        \rho_{12} = \langle Z_1(1)Z_2(1)\rangle/\sqrt{\langle Z_1^2(1) \rangle \langle Z_2^2(1) \rangle },
\end{align}
that is, it plays the role of the cross-correlation coefficient of $\mathbf{Z}(t)$ at time $t=1$.
\end{remark}

\noindent 
\autoref{thm:vfbm-cov} highlights the primary distinction between the two considered models. For the causal 2D fBm, we observe that when  $\eta_{12}\neq0$, the cross-covariance function $\gamma_{12}(s,t)$ is asymmetric, meaning $\gamma_{12}(s, t) \neq \gamma_{12}(t,s)$. This is in contrast to the marginal processes, for which $\gamma_{jj}(s, t)=\gamma_{jj}(t,s)$ for any suitable choice of distinct times $t$ and $s$. Conversely, for well-balanced 2D fBm, the cross-covariance function is symmetric (because $\eta_{12}=\eta_{21}=0$).
\\
\noindent The correspondence between the correlation coefficient of the underlying noise $\rho$ and the cross-correlation of the process, $\rho_{12}$, presented in Eq. (\ref{eq:rho12}), is shown in Figure \ref{fig:rhovsrhoeta12}(a) for different Hurst parameter pairs $(H_1, H_2)$ of the coordinates $X_1(t)$ and $X_2(t)$. Let us highlight the fact that, when the difference between $H_1$ and $H_2$ is large (e.g., $H_1=0.2, H_2=0.7$, yellow line in the figure) the resulting cross-correlation range is much smaller than the correlation of the underlying noise. In the extreme case of $|\rho| = 1$, the parameter $|\rho_{12}|\approx 0.5$. When the difference between $H_1$ and $H_2$ is small (e.g., $|H_1-H_2|=0.25 \pm0.05$, orange and violet lines in the figure), the resulting $\rho_{12}$ is similar to the correlation $\rho$.

\noindent The correspondence between $\rho$ and the asymmetry parameter $\eta_{12}$, as given in Eq. (\ref{eq:eta12}), is presented in Figure \ref{fig:rhovsrhoeta12}(b). We observe that $\eta_{12}$ increases with the difference between $H_1$ and $H_2$, while $\rho_{12}$ decreases.
\begin{figure}[ht!]
\centering
     \includegraphics[width=\textwidth]{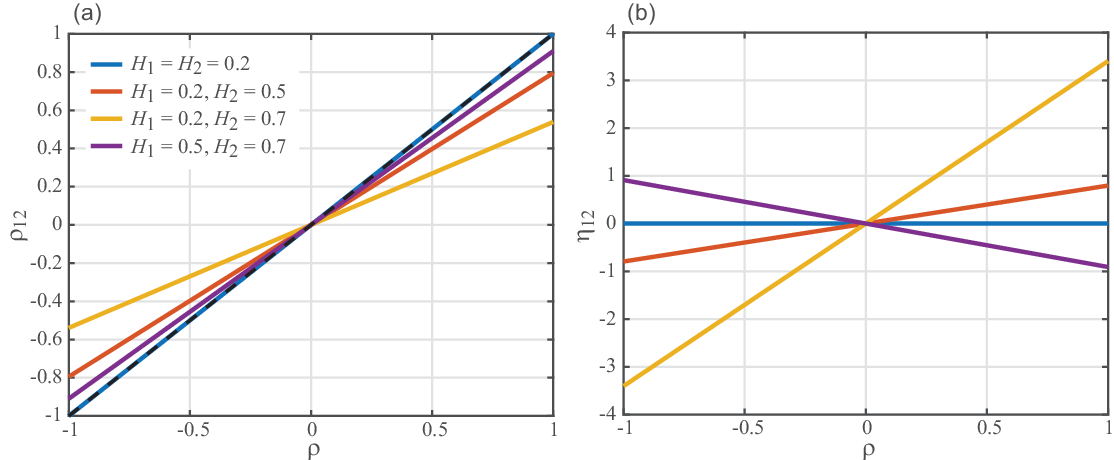}
     \caption{The dependence between correlation $\rho$ of the underlying noise and (a) cross-correlation coefficient $\rho_{12}$ and (b) asymmetry parameter $\eta_{12}$ of the process $\mathbf{X}(t)$ and $\mathbf{X}^*(t)$. Different solid lines correspond to different Hurst exponents of the coordinates, while the black dashed line corresponds to the identity cases $\rho_{12}=\rho$ or $\eta_{12}=\rho$, shown as a guide to the eye.}
 \label{fig:rhovsrhoeta12}    
\end{figure}

\begin{remark} Under the presented construction, the introduced processes $\mathbf{X}(t)$ and $\mathbf{X^*}(t)$ are well-defined for all sets of parameters $H_1, H_2 \in (0,1)$ and $|\rho|\leq1$. Such a natural construction ensures that the condition given in Remark 8 in \cite{amblard2010basic} is true. Moreover, we see that $\rho_{12}^2 \leq \rho^2$.
\end{remark}

\subsection{Covariance structure of increments}
\noindent For any process $\mathbf{Z}(t)$, the increment process over a step $\delta > 0$ is defined as
\begin{align}
    \Delta^\delta \mathbf{Z}(t) \equiv \mathbf{Z}(t+\delta) - \mathbf{Z}(t).
\end{align}
In the following theorem, we present the covariance structure for the increment process of the causal and well-balanced 2D fBm. 
\begin{theorem} 
\label{thm:causal-inc-cov}
Let $H_1, H_2 \in (0,1)$ and $|\rho|\leq1$. The covariance structure of the increment process $\Delta^\delta \mathbf{Z}(t)$ of causal or well-balanced 2D fBm is as follows
\begin{align}
    \gamma_{jk}^\Delta(t, s)
    & = \langle \Delta^\delta Z_j(t) \Delta^\delta Z_k(s) \rangle = \frac{\sigma_j \sigma_k \rho_{jk}}{2}\left[w_{jk}(t-s+\delta) |t-s+\delta|^{H_j+H_k} \right. \nonumber \\
     & \qquad + \left. w_{jk}(t-s-\delta) |t-s-\delta|^{H_j+H_k} - 2w_{jk}(t-s) |t-s|^{H_j+H_k}  \right],
\end{align}
where the function $w_{jk}$ is given in Eq.\eqref{eq:w_jk_causal} and depends on whether we consider causal or well-balanced 2D fBm. The coefficients $\rho_{jk}$ and $\eta_{jk}$ are defined in \autoref{thm:vfbm-cov}.

\end{theorem}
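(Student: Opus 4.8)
The plan is to avoid evaluating any new stochastic integrals and instead reduce everything to the process covariance already established in Theorem~\ref{thm:vfbm-cov}. Since both the causal and well-balanced constructions are zero-mean Gaussian processes, the increment covariance is determined entirely by the covariance of $\mathbf{Z}$. Writing $\Delta^\delta Z_j(t) = Z_j(t+\delta) - Z_j(t)$ and likewise for $k$ and $s$, bilinearity of the covariance gives the four-term expansion
\begin{align}
\gamma_{jk}^\Delta(t,s) &= \gamma_{jk}(t+\delta,\, s+\delta) - \gamma_{jk}(t+\delta,\, s) \nonumber \\
&\quad - \gamma_{jk}(t,\, s+\delta) + \gamma_{jk}(t,\, s).
\end{align}
The entire proof then consists of substituting the closed form of $\gamma_{jk}$ from Eq.~\eqref{eq:cross_covariance} into each term and simplifying.

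The main structural observation I would exploit is that $\gamma_{jk}$ splits into single-variable pieces plus a difference piece. Writing $\gamma_{jk}(t,s) = \tfrac{\sigma_j\sigma_k}{2}\big(A(t) + B(s) - C(t-s)\big)$ with $A(u) = w_{jk}(u)|u|^{H_j+H_k}$, $B(u) = w_{jk}(-u)|u|^{H_j+H_k}$, and $C(u) = w_{jk}(u)|u|^{H_j+H_k}$, the alternating sum kills every ``one-time'' contribution: the $A$-terms cancel as $A(t+\delta) - A(t+\delta) - A(t) + A(t) = 0$, the $B$-terms as $B(s+\delta) - B(s) - B(s+\delta) + B(s) = 0$, and only the difference-argument terms survive. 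Collecting the $C$-contributions, with arguments $t-s$, $(t+\delta)-s = t-s+\delta$ and $t-(s+\delta) = t-s-\delta$, yields
\begin{align}
\gamma_{jk}^\Delta(t,s) = \frac{\sigma_j\sigma_k}{2}\Big[ C(t-s+\delta) + C(t-s-\delta) - 2\,C(t-s) \Big],
\end{align}
which, after rewriting $C$ in terms of $w_{jk}$, is exactly the asserted expression.

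Because this reduction is purely formal and uses only the additive decomposition of $\gamma_{jk}$, it applies verbatim to both models: the causal and well-balanced cases enter solely through the function $w_{jk}$ (equivalently, through whether $\eta_{jk}$ vanishes), and this distinction is transported transparently into the three surviving terms. I do not expect a genuine obstacle here, since the whole content is the bookkeeping of the alternating sum; the only points requiring care are keeping the signs of the shifted arguments $t-s\pm\delta$ straight and checking the borderline case $H_j+H_k = 1$, where $w_{jk}(u) = \rho_{jk} - \eta_{jk}\,\sign(u)\log|u|$. For that case I would simply note that the cancellation of the one-time terms depends only on the \emph{arguments} at which $A$, $B$, $C$ are evaluated and not on the analytic form of $w_{jk}$, so the $\sign$ and $\log$ factors are carried correctly into the final formula without any additional work.
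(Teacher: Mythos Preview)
Your proposal is correct and follows essentially the same route as the paper: expand the increment covariance bilinearly into the four $\gamma_{jk}$ terms and substitute the formula from Theorem~\ref{thm:vfbm-cov}. You go a little further than the paper by making the cancellation of the one-time pieces $A(t)$ and $B(s)$ explicit, which is helpful; note also that your computation naturally yields the prefactor $\tfrac{\sigma_j\sigma_k}{2}$ (the $\rho_{jk}$ already sits inside $w_{jk}$), matching what the substitution actually produces.
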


\begin{proof}
\noindent The proof of this theorem is a natural consequence of the covariance structure of the introduced processes $\mathbf{X}(t)$ and $\mathbf{X}^*(t)$. For the increment process of the causal 2D fBm $\mathbf{X}(t)$ we have
    \begin{align}
    \langle \Delta^\delta X_j(t) \Delta^\delta X_k(s) \rangle &= \langle (X_j(t+\delta)-X_j(t)) (X_k(s+\delta) - X_k(s))\rangle \nonumber \\ 
    & = \gamma_{jk}(t+\delta, s+\delta) - \gamma_{jk}(t, s+\delta) - \gamma_{jk}(t+\delta, s) + \gamma_{jk}(t, s),\quad j,k=1,2.
    \end{align}
 Substituting the covariance function $\gamma_{jk}$ using the results of \autoref{thm:vfbm-cov} yields the desired covariance structure of the increment process. The proof for well-balanced 2D fBm is analogous.   
\end{proof}
\noindent Let us note that for both considered processes, the covariance structure depends only on $h= t-s$ and the length of the interval $\delta$ on which the increments are taken. For $j=k$, we retain the autocovariance function of fractional Brownian motion corresponding to that coordinate\cite{beran2016long}, i.e., 
\begin{align}
    \langle \Delta^\delta X_j(t) \Delta^\delta X_j(s) \rangle = \langle \Delta^\delta X^*_j(t) \Delta^\delta X^*_j(s) \rangle &= \frac{\sigma_j^2}{2} \left( |t-s+\delta|^{2H_j} + |t-s-\delta|^{2H_j} - 2|t-s|^{2H_j}\right).
\end{align}




\section{Spectral Content}
\label{sec:4}
\noindent The PSD matrix plays a central role in the analysis of multidimensional stochastic processes\cite{white2002cross}.
This quantity generalizes the concept of a PSD to vector-valued processes. 
It is particularly valuable in multiple fields, including econometrics \cite{granger1983applications}, structural engineering \cite{de2014fatigue,kerschen2006past}, neuroscience \cite{wang2015power}, and signal processing \cite{amiri2013nature}, where complex systems exhibit interactions across multiple channels or variables. Since PSD is more frequently employed in the context of stationary processes, we begin this section by analyzing the increments of 2D fBm.

\subsection{Spectral content of the  increments}
\noindent In the multivariate setting, for any second-order stationary process $\Delta \mathbf{Z}(t)$, $t\geq 0,$ we can consider the power spectral density matrix $S_{\Delta \mathbf{Z}}(f)$ of a single realization
\begin{equation}
    S_{\Delta \mathbf{Z}}(f) =  \lim_{T\to\infty} \frac{1}{T}\int_{0}^T \int_0^T e^{i (t-s) f} \Delta \mathbf{Z}(t)\Delta \mathbf{Z}(s)' \,\dd t \dd s, \quad f \in \mathbb{R},    
\label{Eq:PSDst}
\end{equation}
with ensemble average
\begin{equation}
    \langle S_{\Delta \mathbf{Z}}(f)\rangle=
    \lim_{T\to\infty}\frac{1}{T}\int_{0}^T\int_0^T e^{i (t-s) f} \gamma_{\Delta \mathbf{Z}}(t-s) \,\dd t \dd s, \quad f \in \mathbb{R},
\label{Eq:WK}
\end{equation}
where $\gamma_{\Delta \mathbf{Z}}(n) = \langle \Delta \mathbf{Z}(n) \Delta \mathbf{Z}(0)'\rangle$ for the zero-mean process $\Delta\mathbf{Z}(t)$. Eq. (\ref{Eq:WK}) simplifies to
\begin{align}
   \langle S_{\Delta \mathbf{Z}}(f)\rangle = 
    \int_{-\infty}^\infty e^{i t f } \gamma_{\Delta\mathbf{Z}}(t) \,\dd t, \quad f \in \mathbb{R},
\end{align}
which is known as Wiener-Khinchin theorem\cite{kubo2012statistical}.
Let us note, that for a $d$-dimensional process, the function $S_{\Delta\mathbf{Z}}$ is a $d\times d$ matrix with the diagonal elements corresponding to marginals and off-diagonal elements describing the, so called, cross power spectral density \cite{white2002cross}.
To streamline the notation, we will use now $S^\Delta$ whenever PSD relates to the increments process, instead of using $S_{\Delta \mathbf{Z}}$.

\noindent One can also consider the components of a power spectral density matrix
\begin{align}
   \langle S^\Delta_{jk}(f)\rangle = 
    \int_{-\infty}^\infty e^{i t f } \gamma^\Delta_{jk}(t) \,\dd t, \quad f \in \mathbb{R},~~j, k=1,2.
\end{align}
It is worth noting that unlike the one-dimensional version of spectral density, the cross components $S^\Delta_{jk}$ for $j\neq k$ might not be real-valued. In general, it is true that $S^\Delta_{jk}(f) = \overline{S^\Delta_{kj}(f)}$ for all $f$'s, where $\overline{z}$ denotes the complex conjugate of $z$.

\begin{theorem}
\label{thm:spectral_vfbm_inc}
Let $H_1, H_2 \in (0,1)$, $H = \mathrm{diag}(H_1, H_2)$ be a diagonal matrix with elements $H_1, H_2$, and $|\rho|\leq1$. Additionally, let matrices $C_\textrm{c} = [c^\textrm{c}_{jk}]_{j, k=1,2}$ and $C_\textrm{wb}= [c^\textrm{wb}_{jk}]_{j, k=1,2}$ that correspond to causal and well-balanced 2D fBm, respectively, have the following elements
\begin{align}
    c^\textrm{c}_{jj}&=\frac{1}{2\pi} \sigma^2_j \Gamma^2(H_j+0.5)a^2_{H_j},\\
    c^\textrm{c}_{jk} &= \frac{1}{2\pi} \rho\sigma_j\sigma_k \Gamma(H_j+0.5)\Gamma(H_k+0.5)a_{H_j}a_{H_k}e^{-i\frac{\pi}{2}(H_j-H_k)},  \quad j\neq k,\\
    c^\textrm{wb}_{jj}&=\frac{2}{\pi} \sigma^2_j \cos^2\left(\frac{(H_j-0.5)\pi}{2}\right)\Gamma^2(H_j+0.5)a^{*2}_{H_j},\\
    c^\textrm{wb}_{jk} &= \frac{1}{2\pi} \rho\sigma_j\sigma_k\cos\left(\frac{(H_j-0.5)\pi}{2}\right)\cos\left(\frac{(H_k-0.5)\pi}{2}\right) \Gamma(H_j+0.5)\Gamma(H_k+0.5)a^*_{H_j}a^*_{H_k}, \quad j\neq k,
\end{align}
for $j, k =1,2$.
Constants $a_{H_j}$ and $a^*_{H_j}$ are given in Eqs. \eqref{eq:C_causal}) and  (\ref{eq:C_wb}), respectively.
Then, for the increments of 2D fBm 
the power spectral density matrix $S^\Delta(f)$ is given by 
\begin{align}
    \langle S^\Delta(f)\rangle 
    & = \left|1-e^{-if}\right|^2 \sum_{n=-\infty}^\infty \left[(f+2\pi n)_+^{-D} \tilde{C}(f + 2\pi n)_+^{-D} + (f+2\pi n)_-^{-D} \overline{\tilde{C}} (f + 2\pi n)_-^{-D} \right]/(f+2\pi n)^2,
\end{align}
where $(x)_+ \equiv \max\{x, 0\}, (x)_- \equiv \max\{-x, 0\}, D = \mathrm{diag}(H_1, H_2) - 0.5 I_{2}$, \added[id=MB]{$I_2$ is} \added[id=DK]{a $2\times2$ identity matrix,} and $\overline{C}$ denotes the element-wise complex conjugate of matrix $C$. Matrix $\tilde C$ is equal to $C_\textrm{c}$ or $C_\textrm{wb}$ depending if we consider causal or well-balanced 2D fBm. The components $\langle S^\Delta_{jk} \rangle$ can be thus expressed as
\begin{align}
    \langle S^\Delta_{jk}(f)\rangle = |1 - e^{-if}|^2 \sum_{n=-\infty}^\infty \left[ (f+2\pi n)_+^{1-H_j-H_k} \tilde{c}_{jk} + (f+2\pi n)_-^{1-H_j-H_k} \overline{\tilde{c}}_{jk}\right]/(f+2\pi n)^2,
\end{align}
where $\tilde{c}_{jk}$ are elements of the appropriate matrix, $C_\textrm{c}$ for the causal case or $C_\textrm{wb}$ for the well-balanced one. Moreover, for $f \to 0$ we have 
\begin{align}
    \langle S^\Delta(f)\rangle \sim f^{-D} \tilde{C} f^{-D}.
\end{align}
The element-wise asymptotic is as follows
\begin{align}
    \langle S^\Delta_{jk}(f)\rangle \sim \tilde{c}_{jk} f^{-(d_j + d_k)} = \tilde{c}_{jk} f^{-(H_j + H_k -1)}.
\end{align}
\begin{proof}
The proof of this theorem is presented in Appendix \autoref{appendix:proof_psd_inc}.
\end{proof}
\end{theorem}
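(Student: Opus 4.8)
The plan is to bypass the direct Fourier transform of the power-law covariances from Theorem~\ref{thm:causal-inc-cov} and instead pass to the harmonizable (frequency-domain) representation of the processes, read off the increment spectral density there, and recover the discrete-time PSD by aliasing. First I would rewrite each marginal in moving-average form $Z_j(t) = \int_{-\infty}^\infty \left(G_j(t-s) - G_j(-s)\right)\,\dd\tilde W_j(s)$, where for the causal case $G_j(u) = \sigma_j a_{H_j}(u)_+^{H_j-1/2}$ and for the well-balanced case $G_j(u) = \sigma_j a^*_{H_j}|u|^{H_j-1/2}$ (the latter because $f_+ + f_-$ collapses to $|t-s|^{H_j-1/2} - |s|^{H_j-1/2}$). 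The key computation is the Fourier transform $\hat G_j(\xi) = \int e^{-i\xi u}G_j(u)\,\dd u$; using the classical identity $\int_0^\infty u^{\alpha}e^{-i\xi u}\,\dd u = \Gamma(\alpha+1)(i\xi)^{-(\alpha+1)}$ (interpreted distributionally when $\alpha = H_j-1/2 < 0$) gives $\hat G_j(\xi) \propto \sigma_j a_{H_j}\Gamma(H_j+1/2)(i\xi)^{-(H_j+1/2)}$ in the causal case, carrying both the $\Gamma(H_j+1/2)$ prefactor and a phase $e^{-i\frac\pi2(H_j+1/2)}$ for $\xi>0$ (its conjugate for $\xi<0$); the symmetric well-balanced kernel instead yields the real factor $|\xi|^{-(H_j+1/2)}\cos\!\left(\frac{\pi(H_j-1/2)}{2}\right)$. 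These are precisely the ingredients of the matrices $C_\textrm{c}$ and $C_\textrm{wb}$, including the $1/(2\pi)$ normalization and the constants from Eqs.~\eqref{eq:C_causal} and \eqref{eq:C_wb}.

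Next I would assemble the cross-spectral density of the continuous-time increments. In the harmonizable representation $\mathbf Z(t) = \int (e^{it\xi}-1)\hat G(\xi)\,\dd\hat{\mathbf W}(\xi)$, the unit increment is $\Delta \mathbf Z(t) = \int e^{it\xi}(e^{i\xi}-1)\hat G(\xi)\,\dd\hat{\mathbf W}(\xi)$, which is stationary with matrix spectral density $\phi(\xi) = |e^{i\xi}-1|^2\,\hat G(\xi)\,R\,\hat G(\xi)^*$, where $R$ is the noise covariance $R_{jj}=1$, $R_{12}=R_{21}=\rho$. Componentwise, for $\xi>0$ this gives $\phi_{jk}(\xi) = |e^{i\xi}-1|^2\,\tilde c_{jk}\,\xi^{1-H_j-H_k}/\xi^2$, because the phase product $(i\xi)^{-(H_j+1/2)}\overline{(i\xi)^{-(H_k+1/2)}}$ collapses to $e^{-i\frac\pi2(H_j-H_k)}$, exactly the constant $\tilde c_{jk}$; for $\xi<0$ the conjugate phase appears, producing $\overline{\tilde c}_{jk}$. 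In matrix form this is $(\xi)_+^{-D}\tilde C(\xi)_+^{-D} + (\xi)_-^{-D}\overline{\tilde C}(\xi)_-^{-D}$ with $D = H - \tfrac12 I_2$.

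Then I would obtain the stated PSD by aliasing: the increments sampled at integer times form a discrete stationary sequence whose $2\pi$-periodic spectral density equals $\langle S^\Delta(f)\rangle = \sum_{n=-\infty}^\infty \phi(f+2\pi n)$, the standard Poisson-summation relation between continuous and sampled spectra. Pulling the periodic factor $|1-e^{-if}|^2$ out of the sum and writing each summand with $1/(f+2\pi n)^2$ reproduces the claimed series and its componentwise form. For the $f\to 0$ asymptotics I would isolate the $n=0$ term: since $|1-e^{-if}|^2 \sim f^2$ cancels the $1/f^2$, that term contributes $\tilde c_{jk} f^{1-H_j-H_k}$, while every $n\neq 0$ term stays bounded and is multiplied by $|1-e^{-if}|^2\to 0$; hence $\langle S^\Delta(f)\rangle \sim f^{-D}\tilde C f^{-D}$ and $\langle S^\Delta_{jk}(f)\rangle \sim \tilde c_{jk}\, f^{-(H_j+H_k-1)}$.

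I expect the main obstacle to be the rigorous computation of $\hat G_j$ when $H_j < 1/2$, where $G_j$ is not integrable and the transform must be taken in the tempered-distribution sense, together with justifying the interchange of integration and summation and the convergence of $\sum_n \phi(f+2\pi n)$ uniformly on compacts away from $f \in 2\pi\Z$. The most error-prone bookkeeping is tracking the branch of $(i\xi)^{-(H_j+1/2)}$ so that the positive- and negative-frequency halves carry $\tilde c_{jk}$ and $\overline{\tilde c}_{jk}$ respectively; this is exactly what encodes the causal asymmetry in $C_\textrm{c}$ versus the real, symmetric $C_\textrm{wb}$, and it must match the covariance-level asymmetry $\eta_{jk}$ identified in Theorem~\ref{thm:vfbm-cov}.
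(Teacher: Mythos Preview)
Your proposal is correct and follows essentially the same route as the paper: pass to the harmonizable (spectral) representation of the increments, compute the continuous-frequency spectral density as $\phi(\xi)=|e^{i\xi}-1|^2\,\hat G(\xi)R\hat G(\xi)^*$, and then fold onto $(-\pi,\pi]$ by the aliasing sum $\sum_n\phi(f+2\pi n)$, with the $f\to0$ asymptotics coming from the $n=0$ term. The only cosmetic difference is that the paper does not recompute $\hat G_j$ from scratch but instead invokes the spectral representation $\Delta\mathbf Z(n)=\int_{\mathbb R}\frac{e^{i(n+1)x}-e^{inx}}{ix}\bigl(x_+^{-D}A+x_-^{-D}\overline A\bigr)\widehat B(\dd x)$ with the matrix $A$ already obtained in Appendices~\ref{appendix:proof_causal_cov} and~\ref{appendix:proof_well-balanced_cov}, so that $C=AA^*$ drops out immediately; your direct Fourier computation of the kernels reproduces exactly that $A$ and hence the same $C_\textrm{c}$, $C_\textrm{wb}$.
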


\begin{remark}
    Matrices $C_\textrm{c}$ and $C_\textrm{wb}$ given in \autoref{thm:spectral_vfbm_inc} have real elements on the diagonal, thus, $\langle S^\Delta_{jj}\rangle, j=1,2$, is also a real (and non-negative) function.
\end{remark}
\begin{remark}
    The direct derivation of matrix $C$ given in \autoref{thm:spectral_vfbm_inc} be found in the proof of \autoref{thm:vfbm-cov} given in Appendix \autoref{appendix:proof_causal_cov} and Appendix \autoref{appendix:proof_well-balanced_cov}.
\end{remark}




\subsection{Spectral content of the process}
\noindent An extension of PSD calculated on the stationary process (here, for the increments process of 2D fBm), one considers PSD calculated based on the information from the trajectory. In particular, given a nonstationary 1D process $Z(t)$, $0\leq t \leq T$, measured over a time $T$, the PSD is typically defined as \cite{norton2003fundamentals,niemann2013fluctuations,krapf2018power,krapf2019spectral} 
\begin{align}\label{eq:psd}
    S_{Z}(f, T) \equiv \frac{1}{T} \left|\int_0^T e^{i f t} Z(t)\, \dd t\right|^2.
\end{align}
Note that in contrast to Eq.~\eqref{Eq:PSDst}, the PSD depends on measurement time $T$. In $d$ dimensions, the natural extension of the PSD defined in Eq. (\ref{eq:psd}) for a real process $\mathbf{Z}(t)$, $ 0\leq t \leq T $,  can be written in an equivalent form,
\begin{align}
    \label{eq:cpsd_general}
    S_\mathbf{Z}(f, T) = \frac{1}{T} \int_0^T e^{i f t} \mathbf{Z}(t)\, \dd t \left(\int_0^T e^{i f s} \mathbf{Z}(s)\, \dd s\right)^*,
\end{align}
where $(\mathbf{x})^*$ is the Hermitian transpose (also known as the conjugate transpose) of $\mathbf{x}$, i.e., $(\mathbf{x})^* = \overline{\mathbf{x}}'$. Again, the function $S_\mathbf{Z}$ is a $d\times d$ matrix with the diagonal elements corresponding to marginals, and off-diagonal elements corresponding to the cross power spectral density.
\noindent Eq. \eqref{eq:cpsd_general} can be written using a double integral
\begin{align}
    S_{\mathbf{Z}}(f, T) = \frac{1}{T} \int_0^T \int_0^T e^{i f (t-s)} \mathbf{Z}(t) \mathbf{Z}'(s) \dd s\, \dd t.
\end{align}
Taking the expected value, we can also calculate the ensemble-averaged PSD
\begin{align}
    \langle S_{\mathbf{Z}}(f, T)\rangle \equiv \frac{1}{T} \int_0^T \int_0^T e^{i f (t-s)} \langle \mathbf{Z}(t) \mathbf{Z}'(s)\rangle \dd s\, \dd t, 
\end{align}
alternatively, written component-wise it is
\begin{align}
    \langle S_{\mathbf{Z}, jk}(f, T)\rangle \equiv \frac{1}{T} \int_0^T \int_0^T e^{i f (t-s)} \langle Z_j(t) Z_k(s)\rangle \dd s\, \dd t, ~~j, k = 1, 2.
\end{align}
Let us note that for $j=k$ we obtain the classical PSD given in Eq. (\ref{eq:psd}) of the 1D process corresponding to the marginals.
Here, we further use the simplified notation $S_{jk}$ instead of $S_{\mathbf{Z}, jk}$ to streamline the notation whenever it is clear which process is referred to.
In both cases (causal and well-balanced),  ensemble-averaged PSD for 2D fBm can be expressed as
{\small \begin{align}
    \langle S_{jk}(\tilde{\omega}, T)\rangle = T^{H_j + H_k + 1} \frac{\sigma_j \sigma_k}{2} \int_0^1 \int_0^1 e^{i \tilde{\omega} (t-s)} \left(w_{jk}(x) x^{H_j+H_k} + w_{jk}(-y) y^{H_j+H_k} - w_{jk}(x-y) |x-y|^{H_j+H_k}  \right)\dd x\, \dd y, 
\end{align}}
where $\tilde{\omega} = f T$. 
The function $w_{jk}$ is defined in Eq. (\ref{eq:w_jk_causal}) with parameters $\rho_{jk}$ and asymmetry parameters $\eta_{jk}$ defined in \autoref{thm:vfbm-cov}. In the following theorem, we present the behavior of such a function.

\begin{theorem}
\label{thm:spectral_vfbm}
For the 2D fBm with $H_1+H_2\neq 1$, the ensemble-averaged PSD for coordinates $j,k= 1, 2$ has the form 
\begin{align}
\langle S_{jk}(\tilde{\omega}, T)\rangle = T^{H_j + H_k + 1} \sigma_j \sigma_k &\left\{
\rho_{jk} \left[\frac{1-\cos \tilde{\omega}}{\tilde{\omega}}\mathcal{S}_{jk} - \left(1 - \frac{\sin \tilde{\omega}}{\tilde{\omega}}\right) \mathcal{C}_{jk} + \frac{\dd}{\dd\tilde{\omega}} \mathcal{S}_{jk} \right] \right. \nonumber \\
+ & \eta_{jk} \left. i \left[\frac{1-\cos \tilde{\omega}}{\tilde{\omega}}\mathcal{C}_{jk} - \left( 1 + \frac{\sin \tilde{\omega}}{\tilde{\omega}}\right) \mathcal{S}_{jk} - \frac{\dd}{\dd\tilde{\omega}} \mathcal{C}_{jk}\right] \right\},
\end{align}
where 
\begin{align}\label{eq:th3_1}
    \mathcal{C}_{jk} \equiv \mathcal{C}_{jk}(\tilde{\omega})&= \int_0^1 \cos(\tilde{\omega} x) x^{H_j+H_k} \,\dd x,\end{align}
 \begin{align}  \label{eq:th3_2} \mathcal{S}_{jk} \equiv \mathcal{S}_{jk}(\tilde{\omega})&= \int_0^1 \sin(\tilde{\omega} x) x^{H_j+H_k} \,\dd x.
\end{align}
\noindent Parameters $\rho_{jk}$ and $\eta_{jk}$ are given in \autoref{thm:vfbm-cov}.

\begin{proof}
    The proof of this theorem is included in Appendix \autoref{appendix:proof_psd}.    
\end{proof}
\end{theorem}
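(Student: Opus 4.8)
The plan is to start from the normalised double-integral representation of $\langle S_{jk}(\tilde{\omega},T)\rangle$ displayed immediately before the theorem, insert the explicit weight $w_{jk}(u)=\rho_{jk}-\eta_{jk}\sign(u)$ (this simple form is available precisely because $H_1+H_2\neq 1$, so the logarithmic branch in Eq.~\eqref{eq:w_jk_causal} does not occur), and split the integrand into its three additive pieces. The first two pieces factorise: on the unit square the sign of the argument is fixed, so that $w_{jk}(x)=\rho_{jk}-\eta_{jk}$ for $x>0$ and $w_{jk}(-y)=\rho_{jk}+\eta_{jk}$ for $y>0$ are constants, and each of the two double integrals separates into a product of two one-dimensional integrals. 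I would evaluate the elementary factor $\int_0^1 e^{\pm i\tilde{\omega}t}\,\dd t=\tfrac{\sin\tilde{\omega}}{\tilde{\omega}}\pm i\,\tfrac{1-\cos\tilde{\omega}}{\tilde{\omega}}$ in closed form, and recognise the remaining factor as $\int_0^1 e^{i\tilde{\omega}t}t^{H_j+H_k}\,\dd t=\mathcal{C}_{jk}+i\mathcal{S}_{jk}$ (with its complex conjugate appearing in the $e^{-i\tilde{\omega}t}$ variant), using the definitions in Eqs.~\eqref{eq:th3_1}--\eqref{eq:th3_2}.

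The third piece is the genuine obstacle, since $\sign(x-y)$ and $|x-y|$ couple the two integration variables and the integral no longer factorises. Here I would exploit that the integrand depends on $x$ and $y$ only through $u=x-y$, so that the double integral over the unit square collapses, via the standard triangular-kernel identity $\int_0^1\!\int_0^1 G(x-y)\,\dd x\,\dd y=\int_{-1}^1 G(u)\,(1-|u|)\,\dd u$, to a single integral. Splitting at $u=0$, using $w_{jk}(u)=\rho_{jk}-\eta_{jk}$ for $u>0$ and $w_{jk}(u)=\rho_{jk}+\eta_{jk}$ for $u<0$, and substituting $u\mapsto -u$ on the negative half, reduces this piece to the two mutually conjugate integrals $\int_0^1 e^{\pm i\tilde{\omega}t}\,t^{H_j+H_k}(1-t)\,\dd t$.

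The key computational step is to dispose of the extra factor $t$ coming from $(1-t)$. The resulting moment integrals $\int_0^1 t^{H_j+H_k+1}\cos(\tilde{\omega}t)\,\dd t$ and $\int_0^1 t^{H_j+H_k+1}\sin(\tilde{\omega}t)\,\dd t$ are not among the quantities defined in the statement, but differentiating $\mathcal{C}_{jk}$ and $\mathcal{S}_{jk}$ under the integral sign gives exactly $\frac{\dd}{\dd\tilde{\omega}}\mathcal{S}_{jk}=\int_0^1 t^{H_j+H_k+1}\cos(\tilde{\omega}t)\,\dd t$ and $\frac{\dd}{\dd\tilde{\omega}}\mathcal{C}_{jk}=-\int_0^1 t^{H_j+H_k+1}\sin(\tilde{\omega}t)\,\dd t$, which is precisely how the derivative terms enter the final formula. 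I would therefore write $\int_0^1 e^{i\tilde{\omega}t}\,t^{H_j+H_k}(1-t)\,\dd t=\big(\mathcal{C}_{jk}-\tfrac{\dd}{\dd\tilde{\omega}}\mathcal{S}_{jk}\big)+i\big(\mathcal{S}_{jk}+\tfrac{\dd}{\dd\tilde{\omega}}\mathcal{C}_{jk}\big)$, reducing the whole third piece to the defined quantities.

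Finally I would collect the three contributions and group the terms proportional to $\rho_{jk}$ and those proportional to $\eta_{jk}$. The main effort—and where the real work lies—is the bookkeeping that verifies the expected cancellations: after summing the two factorised pieces, the imaginary parts of the $\rho_{jk}$-terms must cancel, leaving a purely real combination of $\mathcal{C}_{jk}$, $\mathcal{S}_{jk}$ and $\tfrac{\dd}{\dd\tilde{\omega}}\mathcal{S}_{jk}$, whereas their real parts cancel in the $\eta_{jk}$-channel, so that the surviving imaginary parts combine with the $\eta_{jk}$-contribution of the third piece into a purely imaginary coefficient $i\,\times(\text{real})$ of $\eta_{jk}$. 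Carefully tracking the relative signs of $\sin\tilde{\omega}/\tilde{\omega}$ versus $(1-\cos\tilde{\omega})/\tilde{\omega}$ and of the two derivative terms through these cancellations is the delicate part; matching the result to the two bracketed expressions in the statement is then a finite verification, and the remainder is mechanical.
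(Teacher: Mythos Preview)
Your proposal is correct and follows essentially the same route as the paper's proof: splitting the double integral into the three additive pieces $\mathbb{I}+\mathbb{II}-\mathbb{III}$, factorising the first two using that $w_{jk}$ is constant on the relevant half-lines, reducing the third to a single integral in $u=x-y$ with the triangular weight $(1-|u|)$, and then identifying the extra $t^{H_j+H_k+1}$ moments as $\tfrac{\dd}{\dd\tilde{\omega}}\mathcal{S}_{jk}$ and $-\tfrac{\dd}{\dd\tilde{\omega}}\mathcal{C}_{jk}$. The cancellation pattern you anticipate (purely real $\rho_{jk}$-coefficient, purely imaginary $\eta_{jk}$-coefficient) is exactly what the paper obtains after combining $\mathbb{I}+\mathbb{II}$ and then subtracting $\mathbb{III}$.
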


\begin{remark} We can also use alternative forms for functions $\mathcal{C}$ and $\mathcal{S}$ defined in Eqs. \eqref{eq:th3_1} and \eqref{eq:th3_2}, respectively, to obtain the expressions without derivatives. Alternatively, using integration by parts and the identities 
\begin{align}
    \frac{\dd}{\dd\tilde{\omega}} \mathcal{S}_{jk} &= \frac{\sin \tilde{\omega}}{\tilde{\omega}} - \frac{H_j + H_k + 1}{\tilde{\omega}} \mathcal{S}_{jk},\\
    \frac{\dd}{\dd\tilde{\omega}} \mathcal{C}_{jk} &= \frac{\cos \tilde{\omega}}{\tilde{\omega}} - \frac{H_j + H_k + 1}{\tilde{\omega}} \mathcal{C}_{jk},
\end{align} 
we obtain the equivalent expression
\begin{align}
\langle {S}_{jk}(\tilde{\omega}, T)\rangle = T^{H_j + H_k + 1} \sigma_j \sigma_k &\left\{
\rho_{jk} \left[\frac{1-\cos \tilde{\omega} - H_j - H_k - 1}{\tilde{\omega}}\mathcal{S}_{jk} - \left(1 - \frac{\sin \tilde{\omega}}{\tilde{\omega}}\right) \mathcal{C}_{jk} + \frac{\sin \tilde{\omega}}{\tilde{\omega}} \right] \right. \nonumber \\
+ & \eta_{jk} \left. i \left[\frac{1-\cos \tilde{\omega} + H_j + H_k + 1}{\tilde{\omega}}\mathcal{C}_{jk} - \left( 1 + \frac{\sin \tilde{\omega}}{\tilde{\omega}}\right) \mathcal{S}_{jk} - \frac{\cos \tilde{\omega}}{\tilde{\omega}}\right] \right\}.
\end{align}
\noindent For marginals (i.e., when $j = k$), the expression reduces to the known formula for PSD of 1-dimensional fBm with Hurst parameter $H = H_j = H_k$ \cite{krapf2019spectral}, since $\eta_{jj} = 0$. In contrast, for $j \neq k$, the cross power spectral density acquires an imaginary part in the causal case, reflecting time-asymmetric dependence between components. This imaginary component vanishes in the well-balanced case, where $\eta_{jk} \equiv 0$ (cf. Eq. \eqref{eq:eta_wb}).
\end{remark}

\begin{theorem}[Asymptotic behavior of the ensemble-averaged PSD]
\label{thm:psd_asymptotics}
The ensemble-averaged PSD $\langle S_{jk}(\tilde{\omega}, T)\rangle$ of the 2D fBm admits the following asymptotic regimes as $\tilde{\omega} \to \infty$ if $H_j+H_k\neq 1$ or for well-balanced 2D fBm
\begin{align}
    \Re\langle S_{jk}(\tilde{\omega}, T) &\rangle \sim \quad T^{H_j + H_k + 1} \sigma_j \sigma_k \rho_{jk} \left\{\left[\frac{1}{\tilde{\omega}^2} - \frac{(H_j+H_k)\sin\tilde{\omega}}{\tilde{\omega}^3} + O\left(\frac{1}{\tilde{\omega}^4}\right) \right]  \right. \nonumber \\
    & \left. + a_{jk}\left[\frac{1}{\tilde{\omega}^{H_j + H_k + 1}} - \frac{(H_j+H_k+\cos\tilde{\omega}) \cot\left(\frac{\pi}{2}(H_j+H_k) \right)+ \sin \tilde{\omega}}{\tilde{\omega}^{H_j + H_k + 2}}\right] \right\},\\
    \Im\langle S_{jk}(\tilde{\omega}, T) \rangle &\sim \quad T^{H_j + H_k + 1} \sigma_j \sigma_k \eta_{jk} \left\{\left[\frac{2}{\tilde{\omega}^2} - \frac{3-\cos\tilde{\omega}}{\tilde{\omega}^3} + O\left(\frac{1}{\tilde{\omega}^4}\right) \right]  \right. \nonumber \\
    & \left. -a_{jk} \left[\frac{\cot\left(\frac{\pi}{2}(H_j+H_k) \right)}{\tilde{\omega}^{H_j + H_k + 1}} + \frac{2+H_j +H_k - \cos \tilde{\omega} + \sin\tilde{\omega} \cot\left(\frac{\pi}{2}(H_j+H_k) \right)}{\tilde{\omega}^{H_j+H_k +2}}\right] \right\},
\end{align}
where $\Re$ and $\Im$ denote the real and imaginary parts, respectively.
\noindent Parameters $\rho_{jk}$ and $\eta_{jk}$ are given in \autoref{thm:vfbm-cov}. In the formulas above, the parameters $a_{jk} = \Gamma(H_j + H_k) \sin\left(\frac{\pi}{2}(H_j+H_k)\right)$.
\begin{proof}
The proof of this theorem is based on the asymptotic expansion of the incomplete gamma function and is presented in details in Appendix \autoref{appendix:proof_psd_asymptotics}.
\end{proof}
\end{theorem}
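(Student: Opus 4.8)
The plan is to start from the closed form of $\langle S_{jk}(\tilde\omega,T)\rangle$ established in Theorem \ref{thm:spectral_vfbm}, which writes the PSD as $T^{H_j+H_k+1}\sigma_j\sigma_k$ times $\rho_{jk}$ multiplying a real bracket built from $\mathcal{C}_{jk},\mathcal{S}_{jk},\tfrac{\dd}{\dd\tilde\omega}\mathcal{S}_{jk}$, plus $i\eta_{jk}$ multiplying a second real bracket built from $\mathcal{C}_{jk},\mathcal{S}_{jk},\tfrac{\dd}{\dd\tilde\omega}\mathcal{C}_{jk}$. Since $\rho_{jk}$, $\eta_{jk}$ and all of $\mathcal{C}_{jk},\mathcal{S}_{jk}$ and their $\tilde\omega$-derivatives are real, the first (and essential) observation is that the $\rho_{jk}$ contribution is purely real and the $\eta_{jk}$ contribution purely imaginary. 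Hence $\Re\langle S_{jk}\rangle$ and $\Im\langle S_{jk}\rangle$ decouple cleanly: the real-part asymptotics reduce to the large-$\tilde\omega$ behavior of the $\rho_{jk}$-bracket, and the imaginary-part asymptotics to that of the $\eta_{jk}$-bracket. This already explains why $\Im\langle S_{jk}\rangle$ carries the prefactor $\eta_{jk}$ and vanishes in the well-balanced case, where $\eta_{jk}\equiv 0$.

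The whole problem then collapses to a single analytic task: the joint large-$\tilde\omega$ expansion of $\mathcal{C}_{jk}$ and $\mathcal{S}_{jk}$, which I would obtain together through the complex integral $\mathcal{C}_{jk}+i\mathcal{S}_{jk}=\int_0^1 x^{\alpha}e^{i\tilde\omega x}\,\dd x$ with $\alpha=H_j+H_k$. Substituting $t=-i\tilde\omega x$ turns this into $(-i\tilde\omega)^{-(\alpha+1)}\gamma(\alpha+1,-i\tilde\omega)$, where $\gamma$ is the lower incomplete gamma function, and writing $\gamma(\alpha+1,z)=\Gamma(\alpha+1)-\Gamma(\alpha+1,z)$ separates the two physically distinct branches. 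The $\Gamma(\alpha+1)$ piece yields the non-oscillatory, algebraically decaying branch $\Gamma(\alpha+1)\tilde\omega^{-(\alpha+1)}e^{i\pi(\alpha+1)/2}$, whose real and imaginary parts supply the factors $-\sin(\tfrac\pi2\alpha)$ and $\cos(\tfrac\pi2\alpha)$ and hence all the $\tilde\omega^{-(H_j+H_k+1)}$ and $\tilde\omega^{-(H_j+H_k+2)}$ terms together with the $\cot(\tfrac\pi2\alpha)$ that appears after factoring. The large-argument asymptotics of the upper incomplete gamma $\Gamma(\alpha+1,-i\tilde\omega)\sim(-i\tilde\omega)^{\alpha}e^{i\tilde\omega}[1+\alpha(-i\tilde\omega)^{-1}+\alpha(\alpha-1)(-i\tilde\omega)^{-2}+\cdots]$ supplies the oscillatory endpoint ($x=1$) series, i.e. the $\tilde\omega^{-1},\tilde\omega^{-2},\tilde\omega^{-3}$ contributions; this is equivalent to repeated integration by parts of $\int_1^\infty x^{\alpha}e^{i\tilde\omega x}\,\dd x$. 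Taking real and imaginary parts term by term yields the expansions of $\mathcal{C}_{jk},\mathcal{S}_{jk}$, and the expansions of their derivatives follow at once from the identities for $\tfrac{\dd}{\dd\tilde\omega}\mathcal{S}_{jk}$ and $\tfrac{\dd}{\dd\tilde\omega}\mathcal{C}_{jk}$ recorded in the remark after Theorem \ref{thm:spectral_vfbm}.

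The final step is substitution and collection of like powers, best carried out on the derivative-free forms already given in that remark, i.e. on $-\tfrac{\alpha+\cos\tilde\omega}{\tilde\omega}\mathcal{S}_{jk}+\bigl(\tfrac{\sin\tilde\omega}{\tilde\omega}-1\bigr)\mathcal{C}_{jk}+\tfrac{\sin\tilde\omega}{\tilde\omega}$ for the real part and $\tfrac{\alpha+2-\cos\tilde\omega}{\tilde\omega}\mathcal{C}_{jk}-\bigl(1+\tfrac{\sin\tilde\omega}{\tilde\omega}\bigr)\mathcal{S}_{jk}-\tfrac{\cos\tilde\omega}{\tilde\omega}$ for the imaginary part; inserting the expansions and sorting by order then gives the two stated regimes. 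I expect the genuine obstacle to be precisely this bookkeeping rather than any single hard estimate: two competing decay scales, the endpoint scale $\tilde\omega^{-2}$ and the origin scale $\tilde\omega^{-(\alpha+1)}$, must be tracked simultaneously, and several leading contributions cancel. In particular the $O(\tilde\omega^{-1})$ pieces cancel identically (through $\sin^2+\cos^2=1$ in the real bracket), and reproducing the $\tilde\omega^{-3}$ coefficient forces one to carry the expansions of $\mathcal{C}_{jk},\mathcal{S}_{jk}$ all the way to the third integration-by-parts term, since a truncation at the $\tilde\omega^{-2}$ level gives a wrong coefficient. Care is likewise needed at $\alpha=H_j+H_k=1$: the representation in Theorem \ref{thm:spectral_vfbm} relies on the non-logarithmic form of $w_{jk}$ (cf. Eq. \eqref{eq:w_jk_causal}), so the causal case must be excluded there, matching the hypothesis, whereas in the well-balanced case $\eta_{jk}=0$ and $w_{jk}$ is constant, so the result persists.
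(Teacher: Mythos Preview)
Your proposal is correct and follows essentially the same route as the paper: both start from the closed form of Theorem \ref{thm:spectral_vfbm}, observe the clean $\rho_{jk}$/$\eta_{jk}$ split into real and imaginary parts, obtain the large-$\tilde\omega$ expansions of $\mathcal{C}_{jk}$ and $\mathcal{S}_{jk}$ via the incomplete gamma function, and then substitute and collect by powers. The only cosmetic difference is that the paper quotes the expansions of $\mathcal{C}_{jk}$ and $\mathcal{S}_{jk}$ from \cite{krapf2019spectral} rather than rederiving them as you do, but the underlying mechanism (and the cancellations you anticipate at order $\tilde\omega^{-1}$) are identical.
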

\noindent  The behavior of ensemble-averaged PSD od 2D fBm depends explicitly on the Hurst parameters $H_j$, $H_k$, as well as the parameters $\rho_{jk}$ and $\eta_{jk}$. In the following remark we present the details.

\begin{remark} The asymptotic behavior of the ensemble-averaged PSD $\langle S_{jk}(\tilde{\omega}, T)\rangle$ for 2D fBm depends on whether the sum of Hurst parameters $H_j+H_k$ is greater than or smaller than 1. In particular, when $H_j+H_k>1$, we observe dependence on measurement time $T$. More precisely, we have
\begin{align}
\Re \langle S_{jk}(f, T)\rangle &\sim \sigma_j\sigma_k\rho_{jk} \left[\frac{1}{f^2}T^{H_j+H_k-1} + \frac{a_{jk}}{f^{H_j+H_k+1}} + o(1)\right],\\
\Im \langle S_{jk}(f, T)\rangle &\sim  \sigma_j\sigma_k\eta_{jk} \left[\frac{2}{f^2}T^{H_j+H_k-1} - \frac{a_{jk}\cot\left(\frac{\pi}{2}(H_j+H_k) \right)}{f^{H_j+H_k+1}} + o(1)\right],
\end{align}
Conversely, for $H_j+H_k<1$, the decay is faster, and the cross-component interactions diminish more rapidly with frequency
\begin{align}
\Re \langle S_{jk}(f, T)\rangle &\sim \sigma_j\sigma_k \rho_{jk}\left[\frac{a_{jk}}{f^{H_j+H_k+1}} + \frac{1}{f^2 T^{1-H_j-H_k}} + o\left(T^{H_j+H_k-1}\right)\right],\\
\Im \langle S_{jk}(f, T)\rangle &\sim \sigma_j\sigma_k \eta_{jk}\left[\frac{-a_{jk}\cot\left(\frac{\pi}{2}(H_j+H_k) \right)}{f^{H_j+H_k+1}} + \frac{2}{f^2 T^{1-H_j-H_k}} + o\left(T^{H_j+H_k-1}\right)\right]
\end{align}
This dichotomy highlights the crucial role of $H_j+H_k$ on the spectral structure of the process.
\end{remark}

\section{Numerical simulations}\label{sec:5}
\noindent In this section, we present the comparison of the analytical results obtained in the previous parts of this article with numerical simulations. For the numerical simulations of 2D fBm we apply the Wood and Chan's algorithm\cite{chan1999simulation}, which is based on embedding the covariance matrix of 2D fBm increments into a circulant matrix. All of the presented numerical results are based on the ensemble of $N=5,000$ trajectories of length $T=2^{16}=65,536$ with a time step $1$.

\subsection{Trajectories}
\noindent To visualize 2D fBm and evaluate our analytical results, we performed extensive numerical simulations. Figures \ref{fig:trajs-causal2} and \ref{fig:trajs-causal3} show sample trajectories of the 2D fBm. In both figures, upper row corresponds to the case of independent coordinates, i.e., $\rho = \rho_{12} = 0$, while the bottom row corresponds to the case $\rho_{12}=0.5$. In Figure \ref{fig:trajs-causal2} we present the case with $H_1=H_2$, that is, a case when causal and well-balanced processes coincide. 
In Figure \ref{fig:trajs-causal3} we focus on the case with $H_1 \neq H_2$ and the trajectories are realizations of the causal 2D fBm. We present more trajectories to highlight the difference in the behavior of the process in the cardinal directions. For example, on panels (b) and (d), the process is characterized by subdiffusion on the horizontal axis (1st coordinate) while on the vertical it is a superdiffusion (2nd coordinate).
\begin{figure}[ht!]
    \centering
    \includegraphics[width=\textwidth]{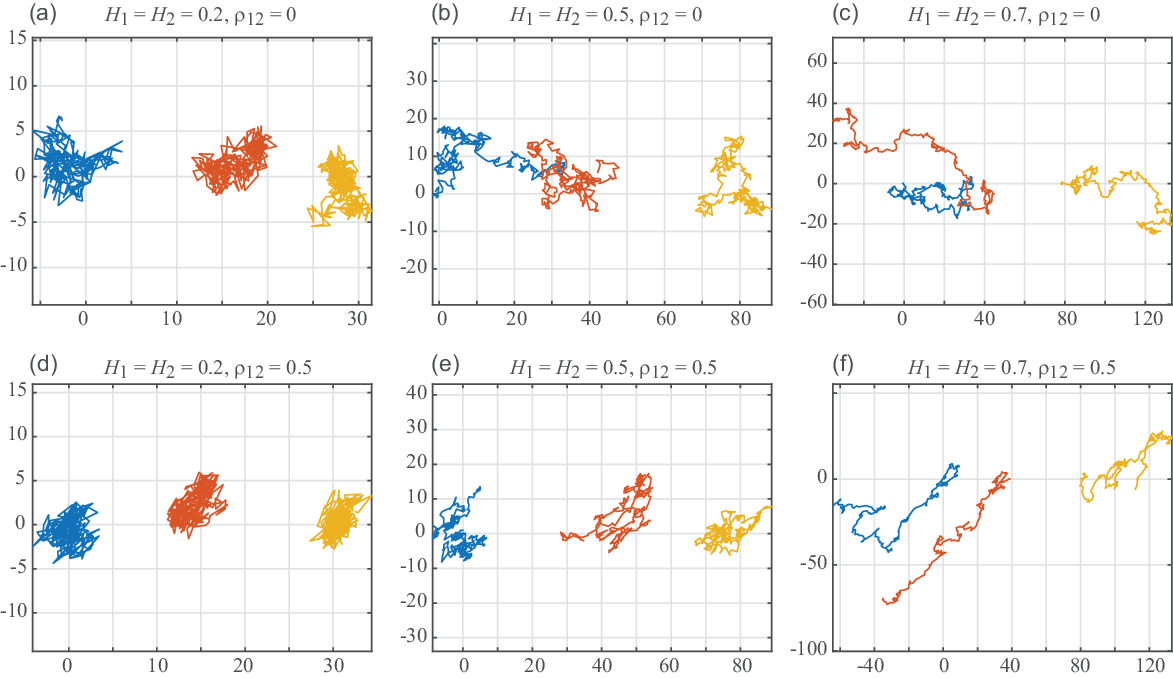}

    \caption{Sample trajectories of causal 2D fBm with $H_1=H_2=H$. Each panel corresponds to a given set of parameters $(H, \rho_{12})$, where the top row (a-c) corresponds to uncorrelated components and the bottom row (d-f) to cross-correlation $\rho_{12}=0.5$. Pairs of panels in each column have the same $H$. The trajectories are shifted on the horizontal axis for clarity.}
    \label{fig:trajs-causal2}
\end{figure}

\newpage

\begin{figure}[ht!]
    \centering
    \includegraphics[width=\textwidth]{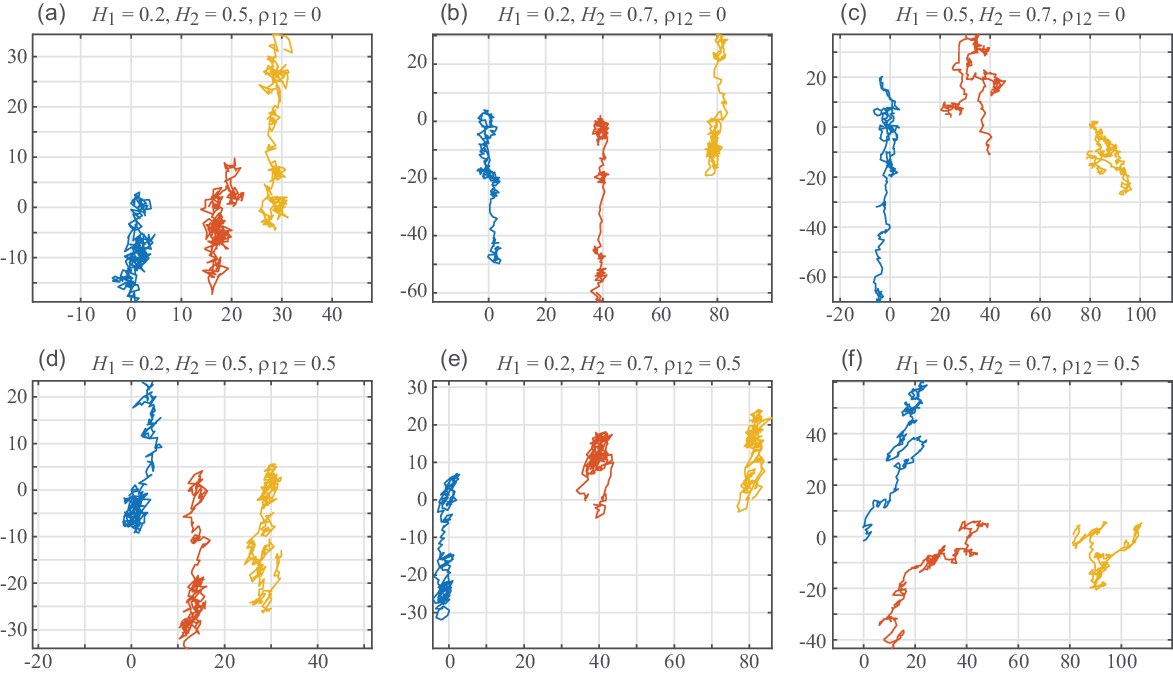}
  
    \caption{Sample trajectories of causal 2D fBm with different Hurst exponents, $H_1 \neq H_2$. Each panel corresponds to a given set of parameters $(H_1, H_2, \rho_{12})$, where the top row (a-c) corresponds to uncorrelated components and the bottom row (d-f) to cross-correlation $\rho_{12}=0.5$. The trajectories are shifted on the horizontal axis for clarity.}
        \label{fig:trajs-causal3}
\end{figure}
\subsection{Autocovariance}
\noindent The different cases of cross-covariance functions $\gamma_{12}$ given in \autoref{thm:vfbm-cov} are presented in Figures \ref{fig:ccov_HH} and \ref{fig:ccov_H1H2}. In both of these figures, the cross-covariance function for the causal version of the process is plotted using a solid line, while for well-balanced case, a dashed line is used. 
\autoref{fig:ccov_HH} showcases the situation when $H_1=H_2$. However, as discussed in Remark \autoref{remark:3}, in such a case both causal and well-balanced cases are the same -- that is why the continuous and dashed lines overlap each other. We see that the cross-covariance function behaves similarly to that we expect for the fractional Brownian motion with the corresponding $H$. The only difference between cross-covariances of the causal and well-balanced processes is the rescaling of such function by a factor of $\rho_{12}$. Thus, each case (subdiffusive $H_1=H_2=0.2$ -- blue line, left panel; diffusive $H_1=H_2=0.5$ -- orange line, middle panel; superdiffusive $H_1=H_2=0.7$ -- yellow line, right panel) retains the behavior.
\\
\noindent A more interesting situation is presented in Figure \ref{fig:ccov_H1H2}, where the corresponding Hurst parameters $H_1$ and $H_2$ are not equal. Here, the dashed lines corresponding to the well-balanced case are different from the ones for causal 2D fBm. Particularly, it seems that the `strength' of the cross-covariance is usually smaller (in the absolute sense) in such cases. The most interesting is the case of $H_1=0.2, H_2=0.7, \rho_{12}=0.5$ (orange line, middle panel) -- for which we change the type of memory for lags $h>0$. For the causal case, the cross-covariance function was positive, while the addition of the well-balanced element makes it negative.
Another interesting effect can be observed for the causal version for  $H_1=0.2, H_2=0.5$ and $H_1=0.5, H_2=0.7$ and $\rho_{12}=0.5$ in both cases. The cross-covariance function disappears for $h>0$ and $h<0$, respectively. It is due to the fact that via the choice of parameters we obtained $\rho_{12} - \eta_{12}\sign{h}=0$ thus cancelling every element in the cross-covaraince function for the increments for lags $|h|\geq1$ (cf. the relation between $\rho_{12}$ and $\eta_{12}$ in Eq.~\eqref{eq:eta_vs_rho} of the Appendix).

\begin{figure}[ht!]
     \centering
         \centering
               \includegraphics[width=\textwidth]{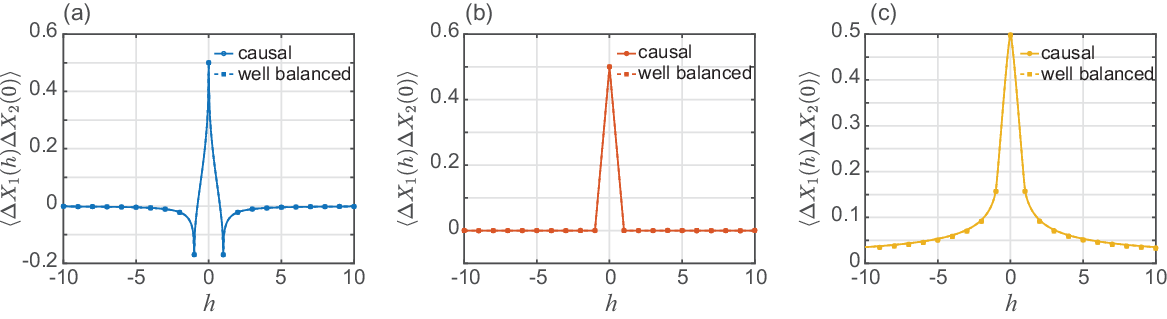}
               \caption{Cross-covariance function depending on $H_1, H_2$ and $\rho_{12}=0.5$ for $H_1=H_2$. (a) $H_1 = H_2 = 0.2$, (b) $H_1 =H_2=0.5$, and (c) $H_1 = H_2 = 0.7$.  
               The solid lines represent the causal version of the model and dashed lines the well-balanced case. Markers correspond to the estimated values of the cross-covariance function; circles correspond to the causal case, squares to the well-balanced case. Here, casual and well-balanced cases overlap, as for $H_1=H_2$ the model is time-reversible regardless of the definition (cf. Eq. (\ref{eq:eta12})). \added[id=MB]{Increments are taken over a unit time interval, i.e., $\Delta \equiv \Delta^{\delta=1}$.}} 
               \label{fig:ccov_HH}
\end{figure}

\begin{figure}[ht!]
     \centering
     \includegraphics[width=\textwidth]{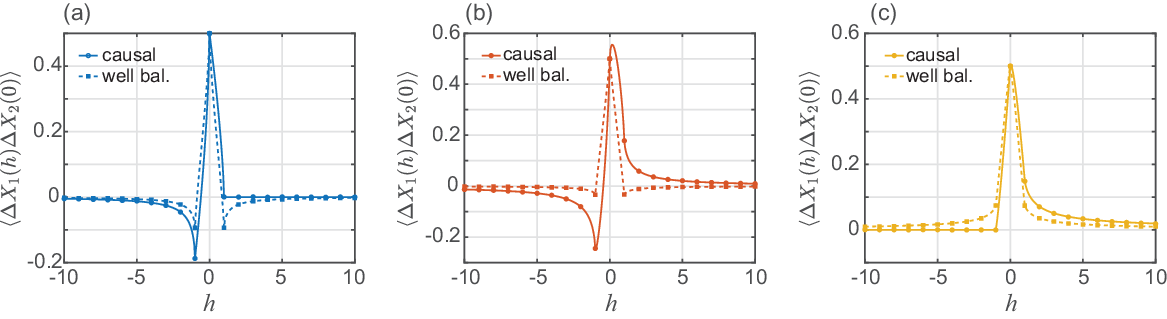}
     \caption{Cross-covariance function depending on $H_1, H_2$ and $\rho_{12}$ for $H_1\neq H_2$. (a) $H_1 = 0.2, H_2=0.5$, (b) $H_1 = 0.2, H_2=0.7$, and (c) $H_1 = 0.5, H_2 = 0.7$. Solid lines represent the causal version of the model, the dashed lines the well-balanced case. Markers correspond to the estimated values of the cross-covariance function; circles correspond to the causal case, squares to the well-balanced case.}
     \label{fig:ccov_H1H2}
\end{figure}

\subsection{Spectral content}
\noindent In Figure \ref{fig:psd1} we present a comparison between estimated ensemble-averaged (cross-)PSD to their theoretical asymptotics for the 2D fBm. As in previous figures, solid lines represent estimates from simulations, while dashed lines indicate the theoretical asymptotics. Different colored solid lines correspond to different sets of parameters $(H_1, H_2)$, whereas $\rho_{12}=0.5$ for all the cases.
Panels (a) and (b) present the real part, i.e., $\Re\langle S_{\mathbf{X}, 12}(f, T)\rangle$, with panel (a) corresponding to the case with $H_1=H_2$ (i.e., when the causal and well-balanced 2D fBms coincide), while panel (b) represents the causal 2D fBm with $H_1\neq H_2$. 
Panel (c) showcases the imaginary part, i.e., $\Im\langle S_{\mathbf{X}, 12}(f, T)\rangle$ for the sets $H_1\neq H_2$ corresponding to panel (b) (as for the $H_1=H_2$ case, the cross-PSD is 0).
\\
\noindent In all panels, we see that the asymptotic behavior of the ensemble-averaged cross-PSD depends on the $H_1+H_2$ value: if it is smaller than 1, the asymptotic decay behaves like $f^{-1-H_1-H_2}$ (for large $f$), whereas if $H_1+H_2\geq 1$, then the asymptotic decay is proportional to $f^{-2}$.

\begin{figure}[ht!]
    \centering
    \includegraphics[width=\textwidth]{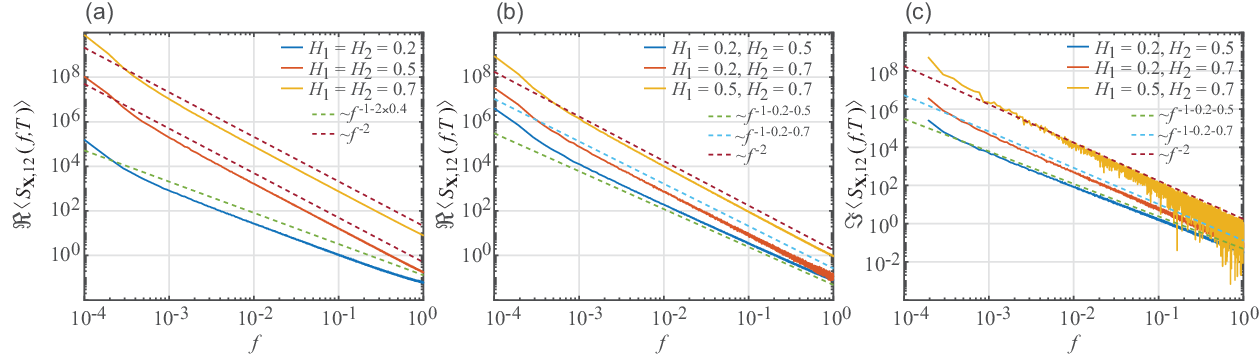}

    \caption{Ensemble-averaged (cross-)PSD. Solid lines correspond to PSD estimated from the simulated trajectories, dashed lines to their asymptotics. (a) $\Re \langle S_{\mathbf{X}, 12}(f, T)\rangle$, $H_1=H_2$; (b) $\Re \langle S_{\mathbf{X}, 12}(f, T)\rangle$, $H_1\neq H_2$; and (c) $\Im \langle S_{\mathbf{X}, 12}(f, T)\rangle$, $H_1\neq H_2$. \added[id=MB]{In each case, $\rho_{12}=0.5$.} Since the real part for both causal and well-balanced case is the same, we plot only only one corresponding line to each set of parameteres. Conversely, the imaginary part for the well-balanced case is 0, so we plot only $\Im\langle S_{\mathbf{X}, 12}\rangle$ for the causal 2D fBm. In all the panels, $T=2^{16}=65,536$ and the lines are based on $N=5,000$ trajectories.}
    \label{fig:psd1}
\end{figure}

\noindent Similarly to Figure \ref{fig:psd1}, in Figure \ref{fig:psd2} we present a comparison between estimated ensemble-averaged (cross-)PSD to their theoretical asymptotics for the increments of 2D fBm. Again, solid lines represent estimates from simulations, while dashed lines indicate the theoretical asymptotics. Different colored solid lines correspond to different sets of parameters $(H_1, H_2)$, whereas $\rho_{12}=0.5$ for all the cases.
Panels (a) and (b) present the real part, i.e., $\Re\langle S_{\Delta \mathbf{X}, 12}(f, T)\rangle$, with panel (a) corresponding to the case with $H_1=H_2$ (i.e., when the causal and well-balanced 2D fBms coincide), while panel (b) represents the causal 2D fBm increments with $H_1\neq H_2$. 
Panel (c) showcases the imaginary part, i.e., $\Im\langle S_{\Delta \mathbf{X}, 12}(f, T)\rangle$ for the sets $H_1\neq H_2$ corresponding to panel (b) (as for the $H_1=H_2$ case, the cross-PSD is 0).
\\
\noindent In all panels, we see that the asymptotic behavior of the ensemble-averaged (cross-)PSD for increments behaves in a similar way in all cases, regardless if $H_1+H_2 \gtrless 1$. The asymptotic behavior, as $f\to 0$ is proportional to $f^{1-H_1-H_2}$. Here, we see a quantitative difference of diverging or converging (cross-)PSD around 0 -- if $H_1+H_2>1$ then the (cross-)PSD diverges (in 1D, it corresponds to one of the definitions of long memory \cite{pipiras2017long}), while for $H_1+H_2<1$ the (cross-)PSD converges to 0.

\begin{figure}[ht!]
    \centering
    \includegraphics[width=\textwidth]{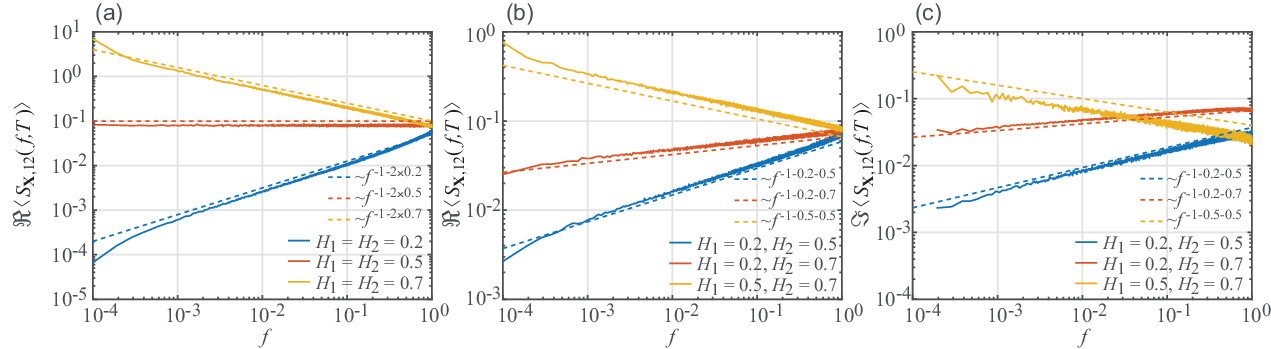}
    \caption{Ensemble-averaged (cross-)PSD for increments. Continuous lines correspond to PSD estimated from the simulated trajectories' increments, dashed lines to their asymptotics. (a) $\Re \langle S^\Delta_{ 12}(f, T)\rangle$, $H_1=H_2$; (b) $\Re \langle S^\Delta_{12}(f)\rangle$, $H_1\neq H_2$; and (c) $\Im \langle S^\Delta_{12}(f)\rangle$, $H_1\neq H_2$. \added[id=MB]{In each case, $\rho_{12}=0.5$. }Since the real part for both causal and well-balanced case is the same, we plot only only one corresponding line to each set of parameters. Conversely, the imaginary part for the well-balanced case is 0, so we plot only $\Im\langle S^\Delta_{12}(f)\rangle$ for the causal 2D fBm's increments. In all the panels, $T=2^{16}=65,536$ and the lines are based on $N=5,000$ trajectories.}
    \label{fig:psd2}
\end{figure}

\section{Conclusions}
\label{sec:conclusions}
In this work, we introduced a natural construction of 2D fBm that accounts for anisotropic scaling and cross-dimensional dependencies, extending the classical framework of Mandelbrot and van Ness to multivariate settings. By incorporating correlated Gaussian noises and a matrix-valued Hurst operator, we provided two distinct formulations, causal and well-balanced 2D fBm, and derived their respective auto- and cross-covariance across the two components. Our analysis highlights key differences between these two formulations, particularly in the asymmetry of causal fBm and implications for time-reversal. Furthermore, we analyzed the spectral properties of the processes, presenting both full analytical derivations and asymptotic behaviors in the frequency domain. Our results reveal how variations in the Hurst exponents and correlation structure manifest in the spectral content, offering new insights into the interpretation of multivariate time series in diverse systems. \added[id=MB]{
The results also clarify the qualitative distinction between the causal and well-balanced versions of the 2D fBm. While both share the same marginal scaling properties and stationary increments, their dependence structures differ} \added[id=DK]{substantially} \added[id=MB]{when $H_1 \neq H_2$. The causal model exhibits asymmetric cross-covariance structure and a phase shift in the spectral domain. In contrast, the well-balanced construction remains time-reversible for all parameter values, with symmetric cross-covariance and no cross-PSD structure. In this sense, the two formulations are complementary. The difference between the models provides a means to identify a model in a real data set.} 
A notable case arises when $H_1=H_2$. In this situation, the process remains a 2D fBm under arbitrary rotations. In contrast, when $H_1\neq H_2$, a rotation mixes the coordinates in such a way that the resulting marginals are no longer fBm, but instead they exhibit more complex dependencies. \added[id=DK]{A 2D processes with $H_1=H_2$ can thus describe a system with two-component fBm in the same geometric space, such as the two-dimensional motion of particles in a complex environment with underlying non-uniform structure. On the other hand, $H_1\neq H_2$ describes a more general case where each component may represent a different fBm, albeit exhibiting correlations. Such two-component processes can emerge in diverse systems. To name a few examples: in climate science, they can describe temperature \cite{syroka2001scaling,brody2002dynamicalpricing} and precipitation \cite{NessMandelbrot}; in hydrology, river flows \cite{NessMandelbrot,losada2020fractional} and erosion \cite{pelletier2007fractal}; and in physiology, heart rate \cite{fischer2003multi} and breathing patterns \cite{hoop1996fractal}.}

Extensive numerical simulations validated our theoretical findings, demonstrating consistency in both the time and frequency domains. The proposed framework thus provides a robust tool for modeling and analyzing complex anomalous diffusion processes in multidimensional environments, especially in settings where directional dependence and inter-component correlations cannot be ignored. This study opens several avenues for further exploration. A potential immediate extension involves generalizing the proposed constructions \added[id=MB]{and results} to higher dimensions. Our work opens the way to the application of 2D fBm to empirical data in fields such as neuroscience, single-particle tracking, structural dynamics, and financial econometrics, which may yield new insights into underlying spatiotemporal correlations. 


\section*{Acknowledgements}
\noindent This work was supported by NCN OPUS project No. 2024/53/B/HS4/00433 (to AW) and by the National Science Foundation (NSF) Grant 2102832 (to DK).

\appendix
\section{Proof of \autoref{thm:vfbm-cov}, causal case and some additional remarks}
\label{appendix:proof_causal_cov}
\begin{proof}
    Comparing the definition of the proposed causal 2D fBm process (cf. Definition \autoref{def:vfbm-causal}) to the  definition of operator fractional Brownian motion $B_H(t), t\geq 0,$ given in formula (9.3.22) from \cite{pipiras2017long}, 
\begin{align}
    \label{eq:vfbm_general}
    B_H(t) = \int_\mathbb{R} ((t-u)_+^D - (-u)_+^D) M_+ + ((t-u)_-^D - (-u)_-^D)M_-) B(\dd u),
\end{align}
we see that for our causal 2D fBm is equivalent when the matrices $M_+$ and $M_-$ are as follows 
\begin{align}
    M_+ &= \begin{bmatrix}
        \sigma_1 a_{H_1} & 0 \\
        \rho \sigma_2 a_{H_2} & \sqrt{1-\rho^2}\sigma_2 a_{H_2}
    \end{bmatrix},\\
    M_- &= 0 \cdot I_2,
\end{align}
for a size-two identity matrix $I_2$.
\\
\noindent Utilizing (9.3.28-29) from \cite{pipiras2017long}\added[id=MB]{i.e., }
\begin{align}
M_+ - M_- &= \sqrt{2\pi} \left[\sin \left(\frac{D\pi}{2}\right)\right]^{-1} \Gamma(D+I)^{-1} A_1,\label{eq:A_1_general}\\
M_+ + M_- &= \sqrt{2\pi} \left[\sin \left(\frac{D\pi}{2}\right)\right]^{-1} \Gamma(D+I)^{-1} A_2,\label{eq:A_2_general}
\end{align}
we can calculate $A = A_1 + i A_2$ that later will be used for determining the covariance structure of the proposed process
\begin{align}
    \label{eq:A_1}
    A_1 &= \frac{1}{\sqrt{2\pi}} \Gamma(D+I) \cdot \sin \left(\frac{D\pi}{2}\right) M_+,\\
    \label{eq:A_2}
    A_2 &= \frac{1}{\sqrt{2\pi}} \Gamma(D+I) \cdot \cos \left(\frac{D\pi}{2}\right) M_+,
\end{align}
where $D = H - \frac{1}{2}I$ is a 2x2 matrix, and all of the present functions ($\Gamma, \sin, \cos$) are understood as the primary matrix functions (def. 9.3.4 in \cite{pipiras2017long}). Therefore, we can explicitly write
\begin{align}
    A_1 &= \frac{1}{\sqrt{2\pi}} 
        \begin{bmatrix}
            \Gamma(H_1 + \frac{1}{2}) & 0 \\
            0 & \Gamma(H_2 + \frac{1}{2})
        \end{bmatrix}\cdot 
        \begin{bmatrix}
            \sin\left(\frac{(H_1 - \frac{1}{2})\pi}{2}\right) & 0 \\
            0 & \sin\left(\frac{(H_2 - \frac{1}{2})\pi}{2}\right)
        \end{bmatrix}\cdot \begin{bmatrix}
        \sigma_1 a_{H_1} & 0 \\
        \rho \sigma_2 a_{H_2} & \sqrt{1-\rho^2}\sigma_2 a_{H_2}
    \end{bmatrix} \nonumber \\
     & = \begin{bmatrix}
         \Gamma(H_1 + \frac{1}{2}) \sin\left(\frac{(H_1 - \frac{1}{2})\pi}{2}\right) \sigma_1 a_{H_1} & 0\\
         \rho\Gamma(H_2 + \frac{1}{2}) \sin\left(\frac{(H_2 - \frac{1}{2})\pi}{2}\right) \sigma_2 a_{H_2} & \sqrt{1-\rho^2}\Gamma(H_2 + \frac{1}{2}) \sin\left(\frac{(H_2 - \frac{1}{2})\pi}{2}\right) \sigma_2 a_{H_2}
     \end{bmatrix},
\end{align}
and similarly for $A_2$
\begin{align}
    A_2 & = \frac{1}{\sqrt{2\pi}} \begin{bmatrix}
         \Gamma(H_1 + \frac{1}{2}) \cos\left(\frac{(H_1 - \frac{1}{2})\pi}{2}\right) \sigma_1 a_{H_1} & 0\\
         \rho\Gamma(H_2 + \frac{1}{2}) \cos\left(\frac{(H_2 - \frac{1}{2})\pi}{2}\right) \sigma_2 a_{H_2} & \sqrt{1-\rho^2}\Gamma(H_2 + \frac{1}{2}) \cos\left(\frac{(H_2 - \frac{1}{2})\pi}{2}\right) \sigma_2 a_{H_2}
     \end{bmatrix}.
\end{align}
Thus, $A = A_1 + i A_2$ can be written as
\begin{align}
    \label{eq:Amatrix}
    A & = \frac{1}{\sqrt{2\pi}} \begin{bmatrix}
          \Gamma(H_1 + \frac{1}{2}) \exp\{i \frac{\pi}{2} (\frac{3}{2} - H_1)\} \sigma_1 a_{H_1} & 0\\
         \rho\Gamma(H_2 + \frac{1}{2}) \exp\{i \frac{\pi}{2} (\frac{3}{2} - H_2)\}\sigma_2 a_{H_2} & \sqrt{1-\rho^2}\Gamma(H_2 + \frac{1}{2}) \exp\{i \frac{\pi}{2} (\frac{3}{2} - H_2)\} \sigma_2 a_{H_2}
     \end{bmatrix}.
\end{align}
\noindent In order to obtain the covariance structure, we calculate $C =[c_{jk}]_{j,k=1,2} = A A^*$ (from Proposition 9.3.19 \cite{pipiras2017long}). In our case, its elements are given by
\begin{align}
    \begin{cases}
    c_{jj}&=\frac{1}{2\pi} \sigma^2_j \Gamma^2(H_j+0.5)a^2_{H_j},\\
    c_{jk} &= \frac{1}{2\pi} \rho\sigma_j\sigma_k \Gamma(H_j+0.5)\Gamma(H_k+0.5)a_{H_j}a_{H_k}e^{-i\frac{\pi}{2}(H_j-H_k)},  \quad j\neq k,
    \end{cases}
    \label{eq:c_jk_causal}
\end{align}
for $j, k =1, 2$. Note that the matrix $C$ is Hermitian, $C=C^*$, and thus its diagonal elements are real.
According to Proposition 9.3.19 \cite{pipiras2017long}, the covariance structure is

\begin{align}
    \gamma_{jk}(t, s) \equiv \langle X_j(t)X_k(s)\rangle = \frac{\sigma_j\sigma_k}{2}\left(w_{jk}(t) |t|^{H_j + H_k} + w_{jk}(-s) |s|^{H_j+H_k} - w_{jk}(t-s)|t-s|^{H_j+H_k} \right), 
\end{align}
where $\sigma_j^2 = \langle X_j^2(1)\rangle$ and
\begin{align}
w_{jk}(u) = \begin{cases}
    \rho_{jk} - \eta_{jk} \sign(u), &\quad H_j+H_k \neq 1,\\
    \rho_{jk} - \eta_{jk} \sign(u)\log|u|, &\quad H_j+H_k =1.
\end{cases}    
\end{align}
\end{proof}
\noindent The connection between the elements of matrix $C$ given in Eq. \eqref{eq:c_jk_causal} and the parameters $\rho_{jk}$ and $\eta_{jk}$ are as follows
\begin{align}
    \sigma_j \sigma_k \rho_{jk} = 4 b_1\left( \frac{H_j +H_k}{2}\right) \Re(c_{jk}),\\
    \sigma_j \sigma_k \eta_{jk} = 4 b_2\left( \frac{H_j +H_k}{2}\right) \Im(c_{jk}),
\end{align}
where 
\begin{align}
    b_1(H) &= \begin{cases}
        \frac{\Gamma(2-2H) \cos(H\pi)}{2H (1-2H)}, &\quad H \neq \frac{1}{2},\\
        \frac{\pi}{2}, &\quad H = \frac{1}{2},
    \end{cases}\\
    b_2(H) &= \frac{\Gamma(2-2H)\sin(H\pi)}{2H(1-2H)}.
\end{align}
Let us calculate $\rho_{jk}$ explicitly. First, consider $j=k$, and assume that $H_j\neq \frac{1}{2}$. We have
\begin{align}
    \sigma_j \sigma_j \rho_{jj} &= 4 b_1\left( \frac{H_j +H_j}{2}\right) \Re(c_{jj}) = 4 b_1(H_j) \Re(c_{jj}) \nonumber \\
    &= 4\frac{\Gamma(2-2H_j) \cos(H_j\pi)}{2H_j (1-2H_j)} \frac{1}{2\pi} \Gamma^2\left(H_j + \frac{1}{2}\right) \sigma^2_j a^2_{H_j}  \nonumber \\
     & = 4\sigma^2_j \frac{\overbrace{\Gamma(1-2H_j)}^{\Gamma(z)\Gamma(1-z) = \frac{\pi}{\sin (\pi z)}} \cos(H_j\pi)}{2H_j} \frac{1}{2\pi} \Gamma^2\left(H_j + \frac{1}{2}\right) a^2_{H_j} \nonumber \\
     & = 2\sigma_j^2 \frac{\cos(H_j\pi)}{\underbrace{2H_j \Gamma(2H_j)}_{\Gamma(2H_j +1)}\sin(2H_j \pi)} \Gamma^2\left(H_j + \frac{1}{2}\right) \underbrace{a^2_{H_j}}_{=\frac{\Gamma(2H_j+1)\sin(H_j\pi)}{\Gamma^2(H_j + \frac{1}{2})}} \nonumber \\
     & = \sigma_j^2.
\end{align}
Therefore, $\rho_{jj}=1$ as expected from the correlation coefficient between $X_j(1)$ and $X_j(1)$.

\noindent Now, let us deal with the more interesting case, i.e., $\rho_{12}$. We have the following
{\small 
\begin{align}
    \sigma_1 \sigma_2 \rho_{12} &= 4 b_1\left( \frac{H_1 +H_2}{2}\right) \Re(c_{12}) \nonumber \\
    &= 4\frac{\Gamma(2-(H_1+H_2)) \cos\left(\frac{H_1+H_2}{2}\pi\right)}{(H_1+H_2) (1-(H_1+H_2))} \frac{1}{2\pi} \Gamma\left(H_1 + \frac{1}{2}\right)\Gamma\left(H_2 + \frac{1}{2}\right) \rho\sigma_1\sigma_2 a_{H_1}a_{H_2} \cos\left(\frac{(H_2-H_1)\pi}{2}\right)  \nonumber \\
     & = 4\rho\sigma_1\sigma_2 \frac{\overbrace{\Gamma(1-(H_1+H_2))}^{\Gamma(z)\Gamma(1-z) = \frac{\pi}{\sin (\pi z)}} \cos\left(\frac{H_1+H_2}{2}\pi\right)}{H_1+H_2} \frac{1}{2\pi} \Gamma\left(H_1 + \frac{1}{2}\right)\Gamma\left(H_2 + \frac{1}{2}\right) a_{H_1}a_{H_2} \cos\left(\frac{(H_2-H_1)\pi}{2}\right) \nonumber \\
     & = 2\rho\sigma_1\sigma_2 \frac{\pi}{\sin((H_1+H_2)\pi) \Gamma(H_1+H_2)} \frac{\cos\left(\frac{H_1+H_2}{2}\pi\right)}{H_1+H_2} \frac{1}{\pi} \Gamma\left(H_1 + \frac{1}{2}\right)\Gamma\left(H_2 + \frac{1}{2}\right) \underbrace{a_{H_1}}_{=\frac{\sqrt{\Gamma(2H_1+1)\sin(H_1\pi)}}{\Gamma(H_1 + \frac{1}{2})}}a_{H_2}\cos\left(\frac{(H_2-H_1)\pi}{2}\right)  \nonumber \\
     & = \rho \sigma_1 \sigma_2 \frac{\sqrt{\Gamma(2H_1+1)\Gamma(2H_2+1)\sin(H_1\pi)\sin(H_2\pi)}}{\Gamma(H_1+H_2+1)\sin\left(\frac{H_1+H_2}{2}\pi\right) }\cos\left(\frac{(H_2-H_1)\pi}{2}\right).
\end{align}
}
Thus, the cross-correlation $\rho_{12}$ is given by 
\begin{align}
    \label{eq:rhovsrho12}
    \rho_{12} = \rho\frac{\sqrt{\Gamma(2H_1+1)\Gamma(2H_2+1)\sin(H_1\pi)\sin(H_2\pi)}}{\Gamma(H_1+H_2+1)\sin\left(\frac{H_1+H_2}{2}\pi\right) }\cos\left(\frac{(H_2-H_1)\pi}{2}\right).
\end{align}
Note that, for the special case $H_1=H_2$, the cross-correlation $\rho_{12}$ is equal to the correlation between noises, i.e., $\rho_{12}=\rho$. 
Lastly, to have the full covariance structure, we need to calculate coefficients $\eta_{jk}$. First, as $\Im(c_{jj}) = 0$ ($C$ being a Hermitian matrix), we have $\eta_{jj}=0 \text{ for } j=1,2$. The more interesting case is $j\neq k$, which after similar calculations as for $\rho_{12}$ leads to 
\begin{align}
    \label{eq:rhovseta12}
    \eta_{12} = \rho\frac{\sqrt{\Gamma(2H_1+1)\Gamma(2H_2+1)\sin(H_1\pi)\sin(H_2\pi)}}{\Gamma(H_1+H_2+1)\cos\left(\frac{H_1+H_2}{2}\pi\right) }\sin\left(\frac{(H_2-H_1)\pi}{2}\right).
\end{align}
It can be expressed as
\begin{align}
    \label{eq:eta_vs_rho}
    \eta_{12} = \rho_{12} \tan\left(\pi \frac{H_2-H_1}{2}\right)\tan\left(\pi \frac{H_1+H_2}{2}\right) = -\rho_{12}\tan\left(\pi \frac{H_1-H_2}{2}\right)\tan\left(\pi \frac{H_1+H_2}{2}\right),
\end{align}
which coincides with the results for the causal process given in Section 3.3 of \cite{amblard2010basic}.

\section{Construction of a well-balanced 2D fBm}\label{App:B}
\noindent First, let us consider a one-dimensional well-balanced fractional Brownian motion given  in Definition \autoref{def:wb-fbm1d} below.
\begin{definition}[Well-balanced 1D fBm]
\label{def:wb-fbm1d}
Well-balanced fractional Brownian motion $X^*(t), t\geq 0,$ is given by the following time-domain representation
    \begin{align}
    X^*(t)      = a^*_H \int_\mathbb{R} ((t-u)_+^{H-\frac{1}{2}} - (-u)_+^{H-\frac{1}{2}}) + ((t-u)_-^{H-\frac{1}{2}} - (-u)_-^{H-\frac{1}{2}})) B(\dd u),
\end{align}
where $H \in (0,1)$ and $a^*_H>0$. 
\end{definition}

\begin{theorem}
\label{thm:well-balanced-constant}
The constant $a^*_H$ for which $\langle X^{*2}(t) \rangle = 1$ for the well-balanced 1D fBm is given by
    \begin{align*}
    a^{*2}_H &= \frac{2H(1-2H)\pi}{8\Gamma(2-2H)\cos(H\pi) \Gamma^2(H+\frac{1}{2}) \cos^2\left(\frac{\pi (H-\frac{1}{2})}{2}\right)}.
\end{align*}
\begin{proof}
  
Let us recall the definition of the function given in Eq. (\ref{eq:f_pm})
\begin{align*}
    f_\pm(x; t, \beta) = (t-x)_\pm^\beta - (-x)_\pm^\beta.
\end{align*}
\noindent In order to calculate the autocovariance of the process, let us first utilize It\^o's lemma. We have
\begin{align}
    \langle X^*(t) X^*(s) \rangle &= a^{*2}_H \int_\mathbb{R} (f_+(u; t, D) + f_-(u;t, D)) (f_+(u; s, D) + f_-(u; s, D)) \dd u \nonumber \\
    & \stackrel{\textrm{Plancherel}}{=}\frac{a^{*2}_H}{2\pi} \int_\R (\widehat{f_+}(\xi; t, D) + \widehat{f_-}(\xi;t, D)) \overline{(\widehat{f_+}(\xi; s, D) + \widehat{f_-}(\xi; s, D))} \dd \xi,
\end{align}
where $\overline{z}$ is a complex conjugate of $z$ and $\widehat{f_\pm}(\xi; t, D)$ is a Fourier transform of $f_\pm$. Thus, it is given by \cite{gradshteyn2014table} (formulae 3.76.4 and 3.76.9)
\begin{align}
    \widehat{f_\pm}(\xi; t, D) \equiv \int_\mathbb{R} e^{i u \xi} f(u;t, D)\dd u= \frac{e^{i t x} - 1}{i x |x|^D} \Gamma(D+1) e^{\mp i \sign(x) \frac{\pi D}{2}}
\end{align}
and
\begin{align}
    \widehat{f_+}(\xi; t, D) + \widehat{f_-}(\xi; t, D) =  \frac{e^{i t x} - 1}{i x |x|^D} \Gamma(D+1) 2 \cos^2\left(\frac{\pi D}{2}\right).
\end{align}
Consequently, the autocovariance function becomes
\begin{align}
    \langle X^*(t) X^*(s) \rangle & = \frac{a^{*2}_H}{2\pi} \int_\mathbb{R} \frac{e^{i t x} - 1}{i x |x|^D} \frac{e^{-i s x} - 1}{-i x |x|^D}  \Gamma^2(D+1) 4 \cos^2\frac{\pi D}{2} \dd x= \nonumber\\
    & = 4 \frac{a^{*2}_H}{2\pi}\Gamma^2(D+1)  \cos^2\left(\frac{\pi D}{2}\right) \int_\R \frac{(e^{i t x} - 1)(e^{-isx}-1)}{|x|^{2(D+1)}} \dd x.
\end{align}
\noindent When $t=s=1$ we have
\begin{align}
    \langle X^{*2}(1) \rangle & = 4 \frac{a^{*2}_H}{2\pi}\Gamma^2(D+1)  \cos^2\left(\frac{\pi D}{2}\right) \int_\R \frac{(e^{i x} - 1)(e^{-ix}-1)}{|x|^{2(D+1)}} \dd x  \nonumber\\
    & = \frac{4a^{*2}_H}{\pi}\Gamma^2(D+1)  \cos^2\left(\frac{\pi D}{2}\right) \int_\R \frac{1-\cos x}{|x|^{2(D+1)}} \dd x \nonumber \\
    & = \frac{a^{*2}_H}{\pi} \Gamma^2(D+1) \cos^2\left(\frac{\pi D}{2}\right) \frac{4\pi}{\Gamma(2(D+1)) \sin((D+\frac{1}{2})\pi)}  \nonumber\\
    & = \frac{a^{*2}_H}{\pi} \Gamma^2(H+\frac{1}{2}) \cos^2\left(\frac{\pi (H-\frac{1}{2})}{2}\right) \frac{4\pi}{\Gamma(2H+1) \sin(H\pi)}.
\end{align}
\noindent To ensure that the variance of the process $X^*(t)$ is equal to 1 at time $t=1$, the constant $a_H$ has to be as follows 
\begin{align}
    a^{*2}_H &= \frac{2H \Gamma(2H) \sin(\pi H)}{4 \Gamma^2(H+\frac{1}{2}) \cos^2 \left(\frac{\pi (H-\frac{1}{2})}{2}\right)} \nonumber \\
    & = \frac{2H(1-2H)\pi}{8\Gamma(2-2H)\cos(H\pi) \Gamma^2(H+\frac{1}{2}) \cos^2\left(\frac{\pi (H-\frac{1}{2})}{2}\right)}.
\end{align}
\end{proof}
\end{theorem}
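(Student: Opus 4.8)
The plan is to compute $\langle X^{*2}(1)\rangle$ directly from the integral representation and then demand that it equal $1$. Since $X^*(t)$ is a Wiener integral against the Brownian motion $B$, the It\^o isometry converts the second moment into a deterministic $L^2$ integral of the kernel $g_t(u)=f_+(u;t,H-\tfrac12)+f_-(u;t,H-\tfrac12)$, namely $\langle X^*(t)X^*(s)\rangle = a^{*2}_H\int_\R g_t(u)\,g_s(u)\,\dd u$. The kernel is a difference of shifted power functions, whose $L^2$ pairing is awkward in the time domain, so I would move to the frequency domain: by Plancherel's theorem this equals $\tfrac{1}{2\pi}\int_\R \widehat{g_t}(x)\,\overline{\widehat{g_s}(x)}\,\dd x$, and the Fourier transforms of the building blocks $f_\pm$ are tabulated in closed form.

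First I would record the Fourier transform $\widehat{f_\pm}(x;t,D)$ with $D=H-\tfrac12$, which has the structure $\frac{e^{itx}-1}{ix\,|x|^{D}}\,\Gamma(D+1)\,e^{\mp i\,\sign(x)\,\pi D/2}$. Adding the causal and anti-causal pieces symmetrizes the phase $e^{\mp i\,\sign(x)\,\pi D/2}$ into the real, even multiplier $2\cos(\pi D/2)$, so that $\widehat{f_+}+\widehat{f_-}$ is simply the power-law prefactor times this real factor. Setting $t=s=1$ then collapses everything onto the single canonical integral $\int_\R \frac{1-\cos x}{|x|^{2(D+1)}}\,\dd x$, because $(e^{ix}-1)(e^{-ix}-1)=2(1-\cos x)$ while the modulus of the remaining factor is $|x|^{-2(D+1)}$ with $2(D+1)=2H+1$. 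This integral converges precisely on the admissible range $H\in(0,1)$ (integrable near $0$ since $1-\cos x\sim x^2/2$, and near $\infty$ since $2H+1>1$), and its value is a standard expression in $\Gamma$ and $\sin$.

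The final step is bookkeeping: substitute the value of the canonical integral, collect the factors $\Gamma(D+1)=\Gamma(H+\tfrac12)$, the squared symmetrization factor $\cos^2(\pi D/2)=\cos^2\!\big(\tfrac{\pi(H-1/2)}{2}\big)$ arising when the real multiplier is squared in the second moment, and the $\sin$ factors, and then solve for $a^{*2}_H$. To reach the stated closed form I would rewrite $\Gamma(2H+1)\sin(H\pi)$ using $\Gamma(2H+1)=2H\,\Gamma(2H)$ together with the reflection formula $\Gamma(z)\Gamma(1-z)=\pi/\sin(\pi z)$, trading $\Gamma(2H)\sin(H\pi)$ for an expression in $\Gamma(2-2H)$ and $\cos(H\pi)$ so that both equivalent forms in the statement appear. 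I expect the main obstacle to be not conceptual but this last trigonometric and gamma-function manipulation: one must handle the half-angle factor $\cos^2\!\big(\tfrac{\pi(H-1/2)}{2}\big)$ carefully and keep track of the poles of $\Gamma$ and zeros of $\sin$ so that the two displayed expressions for $a^{*2}_H$ genuinely coincide throughout $H\in(0,1)$.
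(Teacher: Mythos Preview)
Your proposal is correct and follows essentially the same route as the paper: It\^o isometry, Plancherel, the explicit Fourier transforms of $f_\pm$, reduction to the canonical integral $\int_\R (1-\cos x)\,|x|^{-2H-1}\,\dd x$, and then gamma/reflection-formula bookkeeping. In fact your identification of the symmetrized multiplier as $2\cos(\pi D/2)$ (squared later to give the $4\cos^2$) is cleaner than the paper's display, which contains a harmless typo writing $2\cos^2(\pi D/2)$ for the sum $\widehat{f_+}+\widehat{f_-}$ before squaring.
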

     
\section{Proof of \autoref{thm:vfbm-cov}, well-balanced case and remarks}
\label{appendix:proof_well-balanced_cov}
    \begin{proof}
        Again, looking at the general form given in Eq. (\ref{eq:vfbm_general}), we can calculate matrices $M_+, M_-$ and then, $A_1, A_2$ and $C$.
It is clear that matrices $M_+$ and $M_-$ are the same, and given by
\begin{align}
    M_\pm = \begin{bmatrix}
        a^*_{H_1} \sigma_1 & 0 \\
        a^*_{H_2}\rho \sigma_2 & a^*_{H_2}\sqrt{1-\rho^2}\sigma_2
    \end{bmatrix}.
\end{align}
Thus, by the relations given in Eqs. \eqref{eq:A_1_general}-\eqref{eq:A_2_general} we have
\begin{align}
    A_1 &= 0\cdot I,\\
    A_2 &= \sqrt{\frac{2}{\pi}} \cos\frac{D\pi}{2} \Gamma(D+I) M_+ = \begin{bmatrix}
        a^*_{H_1} \sigma_1 \cos\left(\frac{(H_1-\frac{1}{2})\pi}{2}\right) \Gamma\left(H_1+\frac{1}{2}\right)& 0 \\
        a^*_{H_2}\rho \sigma_2 \cos\left(\frac{(H_2-\frac{1}{2})\pi}{2}\right) \Gamma\left(H_2+\frac{1}{2}\right) & a^*_{H_2}\sqrt{1-\rho^2}\sigma_2\cos\left(\frac{(H_2-\frac{1}{2})\pi}{2}\right) \Gamma\left(H_2+\frac{1}{2}\right)
    \end{bmatrix}.
\end{align}

\noindent In the case of well-balanced 2D fBm, $A = A_1 + i A_2$ can be written as
\begin{align}
    \label{eq:Amatrix_wb}
    A & = iA_2 = i\begin{bmatrix}
        a^*_{H_1} \sigma_1 \cos\left(\frac{(H_1-\frac{1}{2})\pi}{2}\right) \Gamma\left(H_1+\frac{1}{2}\right)& 0 \\
        a^*_{H_2}\rho \sigma_2 \cos\left(\frac{(H_2-\frac{1}{2})\pi}{2}\right) \Gamma\left(H_2+\frac{1}{2}\right) & a^*_{H_2}\sqrt{1-\rho^2}\sigma_2\cos\left(\frac{(H_2-\frac{1}{2})\pi}{2}\right) \Gamma\left(H_2+\frac{1}{2}\right)
    \end{bmatrix}.
\end{align}
Since $A=A_1 + iA_2 = iA_2$ and $C = A A^*$, elements of $C$ are given by 
\begin{align}
    \begin{cases}
    c_{jj}&=\frac{2}{\pi} \sigma^2_j \cos^2\left(\frac{(H_j-0.5)\pi}{2}\right)\Gamma^2(H_j+0.5)a^{*2}_{H_j},\\
    c_{jk} &= \frac{2}{\pi} \rho\sigma_j\sigma_k\cos\left(\frac{(H_j-0.5)\pi}{2}\right)\cos\left(\frac{(H_k-0.5)\pi}{2}\right) \Gamma(H_j+0.5)\Gamma(H_k+0.5)a^*_{H_j}a^*_{H_k}, \quad j\neq k,
    \end{cases}
    \label{eq:c_jk_wb}
\end{align}
for $j, k = 1,2$. We note that $C$ is a real matrix, in contrast to its equivalent in the causal 2D fBm. The consequence of that is that $\eta_{jk} = 0$ for all $j,k=1,2$, and therefore this is a time-reversible version of a 2D fBm. Thus, the covariance structure of two-dimensional well-balanced 2D fBm simplifies to
\begin{align}
    \langle X^*_j(t)X^*_k(s)\rangle = \frac{\sigma_j\sigma_k\rho_{jk}}{2}\left(|t|^{H_j + H_k} + |s|^{H_j+H_k} - |t-s|^{H_j+H_k} \right).
\end{align}
\end{proof} 

\noindent We can also calculate coefficients $\rho_{jk}$ present in the covariance structure
{\small \begin{align}
    \sigma_j^2 \rho_{jj} &= 4 b_1(H_j) \Re(c_{jj}) = 4\underbrace{ \frac{\Gamma(2-2H_j) \cos(H_j \pi)}{2H_j (1-2H_j)}}_{b_1(H_j)} \underbrace{\frac{2}{\pi} \sigma_j^2 a_{H_j}^2\Gamma^2\left(H_j+\frac{1}{2}\right)\cos^2\left(\frac{(H_j-\frac{1}{2})\pi}{2}\right) }_{c_{jj}}  \nonumber\\
    & = 4 \frac{\Gamma(2-2H_j) \cos(H_j \pi)}{2H_j (1-2H_j)} \frac{2}{\pi} \sigma_j^2 \Gamma\left(H_j+\frac{1}{2}\right)\cos^2\left(\frac{(H_j-\frac{1}{2})\pi}{2}\right) \frac{2H_j(1-2H_j)\pi}{8\Gamma(2-2H_j)\cos(\pi H_j) \Gamma^2\left(H_j+\frac{1}{2}\right)\cos^2\left(\frac{(H_j-\frac{1}{2})\pi}{2}\right)}=\sigma_j^2.
\end{align}}
As expected $\rho_{jj}=1$, since it is a correlation of $X^*_j(1)$ with itself. For more interesting case, i.e., for $\rho_{12}$, we have 
{\scriptsize
\begin{align}
    \sigma_1 \sigma_2 \rho_{12} & = 4 b_1\left(\frac{H_1+H_2}{2}\right) \Re(c_{12}) =\nonumber \\
    &= 4 \underbrace{\frac{\Gamma(2-(H_1+H_2)) \cos \frac{(H_1+H_2)\pi}{2}}{(H_1+H_2)(1-(H_1+H_2))}}_{b_1\left(\frac{H_1+H_2}{2}\right)}\underbrace{\frac{2}{\pi} \cos\left(\frac{(H_1-\frac{1}{2})\pi}{2}\right)\Gamma\left(H_1+\frac{1}{2}\right)\cos\left(\frac{(H_2-\frac{1}{2})\pi}{2}\right)\Gamma\left(H_2+\frac{1}{2}\right) \sigma_1\sigma_2 a^*_{H_1}a^*_{H_2}\rho}_{\Re(c_{12})}  \nonumber\\
    & = \frac{8}{\pi}\sigma_1\sigma_2\rho \frac{\Gamma(2-(H_1+H_2)) \cos \frac{(H_1+H_2)\pi}{2}}{(H_1+H_2)(1-(H_1+H_2))} \frac{\cos\left(\frac{(H_1-\frac{1}{2})\pi}{2}\right)\Gamma\left(H_1+\frac{1}{2}\right)\cos\left(\frac{(H_2-\frac{1}{2})\pi}{2}\right)\Gamma\left(H_2+\frac{1}{2}\right) \sqrt{2H_1(1-2H_1)}\sqrt{2H_2(1-2H_2)}\pi}{8\sqrt{\Gamma(2-2H_1)\cos(\pi H_1)}\sqrt{\Gamma(2-2H_2)\cos(\pi H_2)}\Gamma\left(H_1+\frac{1}{2}\right)\cos\left(\frac{(H_1-\frac{1}{2})\pi}{2}\right)\Gamma\left(H_2+\frac{1}{2}\right)\cos\left(\frac{(H_2-\frac{1}{2})\pi}{2}\right)} \nonumber\\
    & =\sigma_1\sigma_2\rho \frac{\cos\left(\frac{(H_1+H_2)\pi}{2}\right)}{\Gamma(H_1+H_2+1)\sin((H_1+H_2)\pi)} \frac{\sqrt{\Gamma(2H_1+1)\Gamma(2H_2+1)\sin(2H_1\pi)\sin(2H_2\pi)}}{\sqrt{\cos(H_1\pi)\cos(H_2\pi)}}  \nonumber\\
    & = \sigma_1\sigma_2\rho \frac{\sqrt{\Gamma(2H_1+1)\Gamma(2H_2+1)}}{\Gamma(H_1+H_2+1)} \frac{\sqrt{\sin(H_1\pi)\sin(H_2\pi)}}{\sin\left(\frac{(H_1+H_2)\pi}{2}\right)}.
\end{align}
}
And thus, 
\small 
\begin{align}
    \rho_{12} = \rho \frac{\sqrt{\Gamma(2H_1+1)\Gamma(2H_2+1)}}{\Gamma(H_1+H_2+1)} \frac{\sqrt{\sin(H_1\pi)\sin(H_2\pi)}}{\sin\left(\frac{(H_1+H_2)\pi}{2}\right)},
\end{align}
which differs from the one given in the causal case missing a $\cos\left(\frac{(H_2-H_1)\pi}{2}\right)$ factor. Similarly, the condition that 
\begin{align}
\rho_{12}^2 \leq \left(\frac{\sqrt{\Gamma(2H_1+1)\Gamma(2H_2+1)}}{\Gamma(H_1+H_2+1)} \frac{\sqrt{\sin(H_1\pi)\sin(H_2\pi)}}{\sin\left(\frac{(H_1+H_2)\pi}{2}\right)} \right)^2
\end{align}
is the one present in the literature \cite{pipiras2017long} that guarantees the proper covariance structure.

\section{Proof of \autoref{thm:spectral_vfbm_inc}, spectral content of the increment process of 2D fBm}
\label{appendix:proof_psd_inc}
\begin{proof}
    First, following \cite{pipiras2017long}, we know the spectral representation of the increments
    \begin{align}
        \Delta \mathbf{Z}(n) \equiv \mathbf{Z}(n+1)-\mathbf{Z}(n) = \int_\mathbb{R} \frac{e^{i(n+1)x} - e^{inx}}{ix}\left(x_+^{-D} A + x_-^{-D}\overline{A}\right) \widehat B(\dd x),
    \end{align}
    where $A$ is given in Eq. \eqref{eq:Amatrix}, $D = H-\frac{1}{2}I$ and $\tilde B$ is Gaussian spectral measure with $\left\langle|\widehat B(\dd x)|^2\right\rangle=\dd x$ control measure. 
    Now we can write 
    \begin{align*}
        \left\langle\Delta\mathbf{Z}(k)\Delta\mathbf{Z}(0)\right\rangle & = \left\langle\int_\mathbb{R}\int_\mathbb{R} \frac{e^{i(k+1)x} - e^{ikx}}{ix} \overline{\left(\frac{e^{iy} - 1}{iy}\right)}\left(x_+^{-D} A + x_-^{-D}\overline{A}\right)\cdot \left(y_+^{-D} A + y_-^{-D}\overline{A}\right)^* \widehat{B}(\dd x)\widehat B^*(\dd y)\right\rangle\\
        &= \int_\mathbb{R} \frac{e^{i(k+1)x} - e^{ikx}}{ix} \overline{\left(\frac{e^{ix} - 1}{ix}\right)}\left(x_+^{-D} A + x_-^{-D}\overline{A}\right)\cdot \left(x_+^{-D} A + x_-^{-D}\overline{A}\right)^* \dd x\\
        & = \sum_{n=-\infty}^\infty \int_{-\pi+2n\pi }^{\pi+2n\pi}\frac{e^{i(k+1)x} - e^{ikx}}{ix} \overline{\frac{e^{ix} - 1}{ix}}\left(x_+^{-D} A + x_-^{-D}\overline{A}\right)\cdot \left(x_+^{-D} A + x_-^{-D}\overline{A}\right)^* \dd x\\
        &\textrm{substituging } x =f+2n\pi \textrm{ and exchanging the integral and sum we obtain}
        \\
        & = \int_{-\pi}^\pi \sum_{n=-\infty}^\infty e^{i k (f + 2 n \pi)} \frac{|e^{i(f + 2n\pi)}-1|^2}{(f+2n\pi)^2} \left((f+2n\pi)_+^{-D} A + (f+2n\pi)_-^{-D}\overline{A}\right)\cdot \left((f+2n\pi)_+^{-D} A + (f+2n\pi)_-^{-D}\overline{A}\right)^* \dd f\\
        &= \int_{-\pi}^\pi e^{i k f} |e^{if}-1|^2 \sum_{n=-\infty}^\infty \left((f+2n\pi)_+^{-D} A A^*(f+2n\pi)_+^{-D}+ (f+2n\pi)_-^{-D}\overline{A}\overline{A^*}(f+2n\pi)_-\right)/(f+2n\pi)^2 \dd f.
    \end{align*}
    Remembering that $AA^*=C$ (elements of $C$ are given in Eqs. \eqref{eq:c_jk_causal} and \eqref{eq:c_jk_wb}), and taking Remark 1.3.6 and 1.3.8 \cite{pipiras2017long} we obtain the final result for the power spectral density of $\Delta \mathbf{Z}$.

\noindent     Considering element-wise power spectral density, we have
    \begin{align*}
        S_{\Delta \mathbf{Z}, jk}(f) &= |e^{if}-1|^2 \sum_{n=-\infty}^\infty \left[(f + 2n\pi)_+^{-(d_j+d_k)} c_{jk} + (f+2n\pi)_-^{-(d_j+d_k)}\overline{c_{jk}}\right]/(f+2n\pi)^2 \\
        &= |e^{if}-1|^2 \sum_{n=-\infty}^\infty \left[(f + 2n\pi)_+^{-(H_j+H_k+1)} c_{jk} + (f+2n\pi)_-^{-(H_j+H_k+1)}\overline{c_{jk}}\right]/(f+2n\pi)^2,
    \end{align*}
    as $d_j = H_j-1/2$.

\noindent     Moreover, by simple calculations we have the asymptotics as $f \to 0$ 
    \begin{align*}
        S_{\Delta \mathbf{Z}, jk}(f) \sim c_{jk}f^{(-H_j-H_k+1)}.
    \end{align*}
\end{proof}

\section{Proof of \autoref{thm:spectral_vfbm}, spectral content of 2D fBm trajectory}
\label{appendix:proof_psd}

\begin{proof}
Power spectral density for the trajectory $\mathbf{Z}(t), 0\leq t\leq T$, is given by
\begin{align}
    S_{jk}(f, T) \equiv \frac{1}{T} \int_0^T e^{i f t} Z_j(t)\, \dd t \overline{\int_0^T e^{ifs} Z_k(s)\,\dd s}.
\end{align}
It is a random variable depending both on the considered frequency $f$, time horizon $T$, and the trajectory $\mathbf{Z}(t)$.
Since we want to consider the ensemble-averaged power spectral density, we have
\begin{align*}
    \langle S_{jk}(f, T)\rangle &= \frac{1}{T} \left\langle \int_0^T e^{i f t} Z_j(t)\, \dd t \overline{\int_0^T e^{ifs} Z_k(s)\,\dd s}\right\rangle = \ |\textrm{by Fubini theorem}| \\
    &= \frac{1}{T}  \int_0^T \int_0^T e^{i f (t-s)} \left\langle Z_j(t)Z_k(s)\right\rangle\, \dd t \, \dd s \\
    &= \frac{1}{T}  \int_0^T \int_0^T e^{i f (t-s)} \gamma_{jk}(t, s)\, \dd t \, \dd s,
\end{align*}
where $\gamma_{jk}(t, s)$ is the covariance function for $Z_j$ and $Z_k$. For simplicity, we write the general form of this function with weighting function $w_{jk}$ that has explicit form for both causal and well-balanced cases (Eq. \eqref{eq:w_jk_causal}).
\\
\noindent Here, we consider the case when $H_j+H_k\neq 1$ or the considered model is well-balanced 2D fBm (so that, there is no 
$\log$ part in the cross covariance function).
By rescaling the integrals' variables $\frac{t}{T}\rightarrow  x, \frac{s}{T}\rightarrow  y$ and by introducing $\tilde{\omega} = f T$, we have
\begin{align}
    \langle S_{jk}(f, T)\rangle &= T^{H_j +H_k + 1}\frac{\sigma_j \sigma_k}{2} \int_0^1 \int_0^1 e^{i \tilde{\omega} (x-y)} \left(w_{jk}(x) x^{H_j+H_k} + w_{jk}(-y) y^{H_j+H_k} - w_{jk}(x-y) |x-y|^{H_j+H_k}\right)\,\dd x\,\dd y
\end{align}
\noindent We calculate each element separately. Let us introduce the notation
\begin{align}
    \mathbb{I} &\equiv \int_0^1 \int_0^1 e^{i\tilde{\omega}(x-y)} w_{jk}(x) x^{H_j+H_k} \dd y\, \dd x, \\
    \mathbb{II} &\equiv \int_0^1 \int_0^1 e^{i\tilde{\omega}(x-y)} w_{jk}
    (-y) y^{H_j+H_k} \dd y\, \dd x,\\
    \mathbb{III} &\equiv \int_0^1 \int_0^1 e^{i\tilde{\omega}(x-y)} w_{jk}(x-y) |x-y|^{H_j+H_k} \dd y\, \dd x.
\end{align}
Then the resulting integral is equal to 
\begin{align}
    \langle S_{jk}(f, T)\rangle &= T^{H_j +H_k + 1}\frac{\sigma_j \sigma_k}{2} (\mathbb{I}+\mathbb{II}-\mathbb{III}).
\end{align}
Let us start with $\mathbb{I}$
\begin{align*}
    \mathbb{I} &\equiv \int_0^1 \int_0^1 e^{i\tilde{\omega}(x-y)} w_{jk}(x) x^{H_j+H_k} \dd y\, \dd x \\
    &= \int_0^1 e^{i\tilde{\omega} x} \frac{1-e^{-i \tilde{\omega}}}{i \tilde{\omega}} x^{H_j+H_k} \underbrace{w_{jk}(x)}_{\text{constant } \forall x>0}\,\dd x  \\
    &= w_{jk}(1) \frac{1-e^{-i \tilde{\omega}}}{i \tilde{\omega}} \left[\int_0^1 \cos(\tilde{\omega} x) + i \sin(\tilde{\omega} x))x^{H_j +H_k}\,\dd x \right].
\end{align*}
Now, let us consider $\mathbb{II}$
\begin{align*}
    \mathbb{II} &\equiv \int_0^1 \int_0^1 e^{i\tilde{\omega}(x-y)} w_{jk}(-y) y^{H_j+H_k} \dd x\, \dd y \\
    &= \int_0^1 e^{-i\tilde{\omega} y} \frac{e^{i \tilde{\omega}}-1}{i \tilde{\omega}} y^{H_j+H_k} \underbrace{w_{jk}(-y)}_{\text{constant } \forall y>0}\,\dd y  \\
    &= w_{jk}(-1) \frac{e^{i \tilde{\omega}}-1}{i \tilde{\omega}} \left[\int_0^1 \cos(\tilde{\omega} x) - i \sin(\tilde{\omega} x))y^{H_j +H_k}\,\dd x \right].
\end{align*}
Summing up the first two parts, we have
\begin{align}
    \mathbb{I}+\mathbb{II} =& \int_0^1 \cos(\tilde{\omega} x)x^{H_j+H_k}\,\dd x\left[w_{jk}(1)\frac{1-e^{-i\tilde{\omega}}}{i\tilde{\omega}} - w_{jk}(-1)\frac{1-e^{i \tilde{\omega}}}{i \tilde{\omega}} \right]\\
    &+\int_0^1 \sin(\tilde{\omega} y)y^{H_j+H_k}\,\dd y\left[w_{jk}(1)\frac{1-e^{-i\tilde{\omega}}}{i\tilde{\omega}} + w_{jk}(-1)\frac{1-e^{i \tilde{\omega}}}{i \tilde{\omega}}\right].
\end{align}
Reintroducing functions $\mathcal{C}_{jk}$ and $\mathcal{S}_{jk}$ as
\begin{align}
    \mathcal{C}_{jk} \equiv \mathcal{C}_{jk}(\tilde{\omega})&= \int_0^1 \cos(\tilde{\omega} x) x^{H_j+H_k} \,\dd x, \\
    \mathcal{S}_{jk} \equiv \mathcal{S}_{jk}(\tilde{\omega})&= \int_0^1 \sin(\tilde{\omega} x) x^{H_j+H_k} \,\dd x,
\end{align}
and utilizing that $w_{jk}(x) = \rho_{jk}-\sign(x)\eta_{jk}$ we can simplify the expression
\begin{align}
    \mathbb{I}+\mathbb{II} =& \mathcal{C}_{jk}\left[(\rho_{jk}-\eta_{jk})\frac{1-\cos(\tilde{\omega})+i\sin(\tilde{\omega})}{i\tilde{\omega}} - (\rho_{jk}+\eta_{jk})\frac{1-\cos(\tilde{\omega})-i\sin(\tilde{\omega})}{i \tilde{\omega}} \right]\nonumber \\
    &+ \mathcal{S}_{jk}\left[(\rho_{jk}-\eta_{jk})\frac{1-\cos(\tilde{\omega})+i\sin(\tilde{\omega})}{i\tilde{\omega}} -(\rho_{jk}+\eta_{jk})\frac{1-\cos(\tilde{\omega})-i\sin(\tilde{\omega})}{i \tilde{\omega}} \right] \nonumber \\
    =& 2\rho_{jk}\left( \frac{\sin \tilde{\omega}}{\tilde{\omega}}\mathcal{C}_{jk}  + \frac{1-\cos\tilde{\omega}}{\tilde{\omega}} \mathcal{S}_{jk}\right) + 2i\eta_{jk}\left( \frac{1-\cos\tilde{\omega}}{\tilde{\omega}} \mathcal{C}_{jk} - \frac{\sin \tilde{\omega}}{\tilde{\omega}} \mathcal{S}_{jk}\right)
\end{align}
\noindent For the third part, we have
\begin{align*}
    \mathbb{III} &\equiv \int_0^1 \int_0^1 e^{i\tilde{\omega}(x-y)} w_{jk}(x-y) |x-y|^{H_j+H_k} \dd x\, \dd y= \quad |\textrm{substitute }u = x-y,\ z = x|\\
    &= \int_{-1}^0 du \int_0^{u+1}dz w_{jk}(u)e^{i\tilde{\omega} u} |u|^{H_j+H_k} + \int_0^1du\int_u^1 dz w_{jk}(u)e^{i \tilde{\omega} u} u^{H_j+H_k}\\
    &= \int_{-1}^0 du w_{jk}(u)e^{i\tilde{\omega} u}|u|^{H_j+H_k}(1+u)+\int_0^1 du w_{jk}(u)e^{i\tilde{\omega} u}u^{H_j+H_k}(1-u)\\
    &= \int_0^1du \underbrace{w_{jk}(-u)}_{=\rho_{jk}+\eta_{jk}}e^{-i\tilde{\omega} u}u^{H_j+H_k}(1-u)+\int_0^1 du \underbrace{w_{jk}(u)}_{=\rho_{jk}-\eta_{jk}}e^{i\tilde{\omega} u}u^{H_j+H_k}(1-u)\\
    &=\rho_{jk}\int_0^1 \underbrace{\left(e^{-i\tilde{\omega} u} + e^{i \tilde{\omega} u}\right)}_{=2\cos(\tilde{\omega} u)} u^{H_j+H_k}(1-u)du - \eta_{jk}\int_0^1 \underbrace{\left(e^{i \tilde{\omega} u}-e^{-i\tilde{\omega} u} \right)}_{=2i\sin(\tilde{\omega} u)} u^{H_j+H_k}(1-u)du \\
    & = 2\rho_{jk}\left( \mathcal{C}_{jk} -\int_0^1 \cos(\tilde{\omega} u) u^{H_j+H_k+1}du\right)-2i\eta_{jk}\left( \mathcal{S}_{jk} -\int_0^1 \sin(\tilde{\omega} u)u^{H_j+H_k+1}du\right).
\end{align*}

\noindent We see that
\begin{align}
    \int_0^1 \cos(\tilde{\omega} u)u^{H_j+H_k+1}du&= \frac{\dd}{\dd\tilde{\omega}}\int_0^1\sin(\tilde{\omega} u)u^{H_j+H_k}du= \frac{\dd}{\dd\tilde{\omega}}\mathcal{S}_{jk},\\
    \int_0^1 \sin(\tilde{\omega} u)u^{H_j+H_k+1}du&= -\frac{\dd}{\dd\tilde{\omega}}\int_0^1\cos(\tilde{\omega} u)u^{H_j+H_k}du= -\frac{\dd}{\dd\tilde{\omega}}\mathcal{C}_{jk},
\end{align}
or, via integration by parts we obtain
\begin{align}
    \int_0^1 \cos(\tilde{\omega} u)u^{H_j+H_k+1}du= \frac{\sin\tilde{\omega}}{\tilde{\omega}}-\frac{H_j+H_k+1}{\tilde{\omega}}\mathcal{S}_{jk}, \\
    \int_0^1 \sin(\tilde{\omega} u)u^{H_j+H_k+1}du= \frac{-\cos\tilde{\omega}}{\tilde{\omega}}+\frac{H_j+H_k+1}{\tilde{\omega}}\mathcal{C}_{jk}.
\end{align}
We then arrive at the final expression
\begin{align}
    \mathbb{I}+\mathbb{II}-\mathbb{III} =& 2\rho_{jk} \left(\left[\frac{\sin \tilde{\omega}}{\tilde{\omega}}-1\right]\mathcal{C}_{jk} + \frac{1-\cos\tilde{\omega}}{\tilde{\omega}}\mathcal{S}_{jk} + \frac{\dd}{\dd\tilde{\omega}} \mathcal{S}_{jk}\right)  \nonumber \\
    & + 2i \eta_{jk} \left(\frac{1-\cos\tilde{\omega}}{\tilde{\omega}}\mathcal{C}_{jk} - \left[\frac{\sin\tilde{\omega}}{\tilde{\omega}}+1\right]\mathcal{S}_{jk} - \frac{\dd}{\dd\tilde{\omega}}\mathcal{C}_{jk}\right),
\end{align}
or equivalently
\begin{align}
    \mathbb{I}+\mathbb{II}-\mathbb{III} =& 2\rho_{jk} \left(\left[\frac{\sin \tilde{\omega}}{\tilde{\omega}}-1\right]\mathcal{C}_{jk} + \frac{1-\cos\tilde{\omega} - H_j -H_k - 1}{\tilde{\omega}}\mathcal{S}_{jk} + \frac{\sin\tilde{\omega}}{\tilde{\omega}} \right) \nonumber \\
    & + 2i \eta_{jk} \left(\frac{1-\cos\tilde{\omega} + H_j +H_k + 1}{\tilde{\omega}}\mathcal{C}_{jk} - \left[\frac{\sin\tilde{\omega}}{\tilde{\omega}}+1\right]\mathcal{S}_{jk}\right).
\end{align}
\end{proof}

\section{Proof of \autoref{thm:psd_asymptotics}, asymptotics of ensemble-averaged PSD}
\label{appendix:proof_psd_asymptotics}
\begin{proof}
Per \cite{krapf2019spectral} we have the asymptotic behaviour of functions $\mathcal{C}_{jk}$ and $\mathcal{S}_{jk}$

\begin{align}
    \mathcal{C}_{jk}(\tilde{\omega}) &\simeq \frac{a_{jk}}{\tilde{\omega}^{H_j+H_k+1}} + \frac{\sin \tilde{\omega}}{\tilde{\omega}} + \frac{(H_j+H_k)\cos\tilde{\omega}}{\tilde{\omega}^2} + \frac{(H_j+H_k)(1-H_j-H_k)\sin\tilde{\omega}}{\tilde{\omega}^3} + O\left(\frac{1}{\tilde{\omega}^4}\right),\\
    \mathcal{S}_{jk}(\tilde{\omega}) &\simeq \frac{a_{jk} \tan\left(\frac{\pi(H_j+H_k)}{2}\right)}{\tilde{\omega}^{H_j+H_k+1}} - \frac{\cos \tilde{\omega}}{\tilde{\omega}} + \frac{(H_j+H_k)\sin\tilde{\omega}}{\tilde{\omega}^2} - \frac{(H_j+H_k)(1-H_j-H_k)\cos\tilde{\omega}}{\tilde{\omega}^3} + O\left(\frac{1}{\tilde{\omega}^4}\right),
\end{align}
where $a_{jk} = \Gamma(H_j + H_k) \sin\left(\frac{\pi}{2}(H_j+H_k)\right)$.

\noindent Thus, grouping the elements by their powers, we have (as $\tilde{\omega}\to \infty$)
    \begin{align}
        \Re \langle S_{jk}(\tilde{\omega}, T)\rangle &\simeq T^{H_j+H_k+1}\sigma_j\sigma_k\rho_{jk}\left\{ \left[\frac{1}{\tilde{\omega}^2} - \frac{(H_j+H_k)\sin\tilde{\omega}}{\tilde{\omega}^3} + O\left(\frac{1}{\tilde{\omega}^4}\right) \right] \right. \\
        &+\left.\frac{a_{jk}}{\tilde{\omega}^{H_j+H_k+1}}\left[1 - \frac{(H_j+H_k+\cos\tilde{\omega})\cot\left(\frac{\pi(H_j+H_k)}{2}\right)}{\tilde{\omega}} \right] \right\},\\
        \Im \langle S_{jk}(\tilde{\omega}, T)\rangle &\simeq  T^{H_j+H_k+1}\sigma_j\sigma_k\eta_{jk}\left\{ \left[\frac{2}{\tilde{\omega}^2} - \frac{(H_j+H_k)\cos\tilde{\omega}(3-\cos\tilde{\omega})}{\tilde{\omega}^3} + O\left(\frac{1}{\tilde{\omega}^4}\right) \right] \right.\nonumber \\
        &+ \left.\frac{a_{jk}}{\tilde{\omega}^{H_j+H_k+1}}\left[\cot\left(\frac{\pi(H_j+H_k)}{2}\right)
         - \frac{2+H_j+H_k-\cos\tilde{\omega} + \sin\tilde{\omega} \cot\left(\frac{\pi(H_j+H_k)}{2}\right)}{\tilde{\omega}} \right] \right\}.
    \end{align}
\end{proof}

\section*{References}

\bibliographystyle{unsrtnat}
\bibliography{bibliography.bib}

\end{document}